\newtheorem{thm}{Theorem}
\newtheorem{lem}{Lemma}
\newtheorem{prop}{Proposition}
\theoremstyle{definition}
\newtheorem{definition}{Definition}
\theoremstyle{remark}
\newtheorem{remark}{Remark}
\theoremstyle{remark}
\newtheoremstyle{named}{}{}{\itshape}{}{\bfseries}{.}{.5em}{\thmnote{#3}#1}
\theoremstyle{named}
\newtheorem*{namedtheorem}{}
\long\def\symbolfootnote[#1]#2{\begingroup\def\thefootnote{\fnsymbol{footnote}}
\footnote[#1]{#2}\endgroup}
\theoremstyle{definition}
\newcommand{\CC}{\mathbb{C}}
\newcommand{\QQ}{\mathbb{Q}}
\newcommand{\ZZ}{\mathbb{Z}}
\newcommand{\PGL}{\operatorname{PGL}}
\newcommand{\Manoa}{M\=anoa}
\newcommand{\Hawaii}{Hawai\kern.05em`\kern.05em\relax i}
\title{On a slice of the cubic 2-adic Mandelbrot set}
\author{Jacqueline Anderson}
\email{jacqueline.anderson@bridgew.edu}
\address{Mathematics Department, Bridgewater State University, Bridgewater, MA 02325 USA}
\author{Emerald Stacy}
\email{estacy2@washcoll.edu}
\address{Department of Mathematics and Computer Science, Washington College, Chestertown, MD 21620 USA}
\author{Bella Tobin}
\email{btobin@agnesscott.edu}
\address{Department of Mathematics, Agnes Scott College, Decatur, GA 30030 USA}
\begin{document}

\maketitle
\begin{abstract}
    Consider the one-parameter family of cubic polynomials defined by $f_t(z) =-\frac 32 t(-2z^3+3z^2)+1, t \in \mathbb{C}_2$. This family corresponds to a slice of the parameter space of cubic polynomials in $\mathbb{C}_2[z]$. We investigate which parameters in this family belong to the cubic $2$-adic Mandelbrot set, a $p$-adic analog of the classical Mandelbrot set. When $t=1$, $f_t(z)$ is post-critically finite with a strictly preperiodic critical orbit. We establish that this is a non-isolated boundary point on the cubic $2$-adic Mandelbrot set and show asymptotic self-similarity of the Mandelbrot set near this point. Subsequently, we investigate the Julia set for polynomial on the boundary and demonstrate a similarity between the Mandelbrot set at this point and the Julia set, similar to what is seen in the classical complex case. 
\end{abstract}
\section{Introduction}
The \emph{Mandelbrot} set, perhaps the most famous set in complex dynamics, is a subset of the complex plane determined by the dynamical properties of quadratic polynomials of the form $f_c(z)=z^2+c$. The Mandelbrot set may be defined in terms of the forward orbit of the critical point $0$ under iteration of $f_c$. The (forward) orbit of a point $z$ under iteration of a map $f$ is denoted as follows:
\[ \mathcal{O}_f(z) = \{ f^n(z): n \in \mathbb{Z}_{ \geq 0} \}, \] 
where $f^n = f \circ f \circ \dots \circ f$ denotes the $n$-fold composition of $f$ with itself.
The Mandelbrot set is then defined as 
\[\mathcal{M} = \lbrace c\in \CC : \mathcal{O}_{f_c}(0)\text{ is bounded for }\ f_c(z) = z^2+c\rbrace.\]

A \emph{Misiurewicz point} is a point in the Mandelbrot set for which the orbit of 0 is strictly preperiodic. These points all appear on the boundary of the Mandelbrot set, and in fact are dense in the boundary of $\mathcal{M}$. In 1989, Tan Lei proved that the Mandelbrot set zoomed in on a Misiurewicz point $c$ and the corresponding Julia set for $f_c$ zoomed in on $c$ at the same scale were asymptotically similar about $c$ \cite{TanLei}. Moreover, both are asymptotically self-similar as well. The aim of this paper is to explore the extent to which similar properties exist in the $2$-adic setting for cubic polynomials. 

The Mandelbrot set is of great interest in complex dynamics and has been an inspiration for analogues in the $p$-adic setting. One might notice that looking at the Mandelbrot set in the $p$-adic setting yields less interesting results than in the complex setting. In particular, the Mandelbrot set for the family of maps $z^2+c$ is the unit disk in $\CC_p$. A natural extension of the Mandelbrot set is to look at polynomials of higher degrees.

We say a map $f$ is \emph{post-critically bounded}, or PCB, if all of its critical points have bounded forward orbits under iteration of $f$. Similarly, we say $f$ is \emph{post-critically finite}, or PCF, if all of its critical points have finite forward orbits. 

Let $\mathcal{P}_{d,p}$ denote the $(d-1)$-dimensional parameter space of degree $d$ polynomials $f$ defined over $\mathbb{C}_p$, normalized so that $f$ is monic and $f(0)=0$. (Note that every polynomial can be conjugated so that it is in this normal form, but that representation is not unique in that there exist different polynomials in $\mathcal{P}_{d,p}$ that are conjugates of each other.) We can define the degree $d$ $p$-adic Mandelbrot set, $\mathcal{M}_{d,p}$, as the set of parameters in $\mathcal{P}_{d,p}$ that yield a PCB polynomial. When $p \geq d$, this generalized $p$-adic Mandelbrot set is similar to what we see in the quadratic case: a polynomial in $\mathcal{P}_{d,p}$ is PCB if and only if all of its coefficients belong to the unit disk in $\mathbb{C}_p$.

When $p<d$, however, the $p$-adic Mandelbrot set $\mathcal{M}_{d,p}$ has an interesting structure more akin to that of the complex Mandelbrot set; in particular, this set has boundary points. In \cite{anderson2010p} the first author found a point on the boundary of $\mathcal{M}_{3,2}$. She completed this by investigating the one-parameter family of cubic polynomials of the form $g_t(z) = z^3-\frac{3}{2}tz^2$ for $t \in \mathbb{C}_2$. Note that $g_t$ has two critical points, $0$ and $t$, and that $0$ is fixed. When $t=1$ it is easy to see that both critical points have a finite forward orbit. In \cite{anderson2010p} it was shown that $g_1(z) = z^3-\frac{3}{2}z^2$ is a boundary point on the cubic $2$-adic Mandelbrot set.

In this paper we explore a different slice of the cubic $2$-adic Mandelbrot set, the one-parameter family defined as follows:
\[ f_t(z) = -\tfrac{3}{2} t (-2z^3+3z^2)+1. \]

The polynomials in this family have critical points $0$ and $1$, and $f_t(0) = 1$, so there is just one critical orbit to keep track of. Note that we are using a different normal form than the one used in the definition of $\mathcal{P}_{d,p}$, as these polynomials are not monic and zero is not fixed. Rather, in this family we choose to normalize so the critical points are $0$ and $1$, following the normal form for bicritical polynomials as used in \cite{anderson2020cubic, TobinThesis}. Using this family, we can find another point on the boundary of $\mathcal{M}_{3,2}$.

\begin{namedtheorem}[Main Theorem]\label{thm:main}
The point associated to $f_1(z) = -\frac{3}{2}(-2z^3+3z^2)+1$ is on the boundary of the cubic $2$-adic Mandelbrot set. 
\end{namedtheorem}

In the complex quadratic setting, Tan Lei showed that there are similarities between the Mandelbrot set near a Misiurewicz point and the Julia set for the corresponding quadratic polynomial. Inspired by this result, we explore the Julia set of $f_1(z)$ in Section \ref{sec:julia} to see if there is any such relationship in the $p$-adic setting for the boundary point given in the Main Theorem.

\subsection{Outline} We begin by providing background and necessary definitions in Section \ref{sec:prelim}. In Section \ref{sec: mandelset} we prove the Main Theorem by identifying one sequence of disks approaching $t=1$ in which the parameters belonging to those disks are not in the Mandelbrot set, and other sequences of disks approaching $t=1$ whose contents are in the Mandelbrot set. Then, in Section \ref{sec:julia} we discuss the $2$-adic Julia set for $f_1(t) = -\frac{3}{2}(-2z^3+3z^2)+1$, demonstrating that there appears to be similarities between the Mandelbrot set and this Julia set. Those similarities can be seen when one compares the results in Theorems~\ref{thm: unbdd} and~\ref{thm:q2bdd} on the Mandelbrot set to those in Propositions~\ref{prop:unbddjulia} and~\ref{prop:bddjulia} on the Julia set for $f_1$.
\subsection{Acknowledgements}
This work was initiated during the Women in Numbers 6 workshop at Banff International Research Station (BIRS) in 2023. The authors thank BIRS and the WIN6 organizers for providing accommodations that allowed us to focus on this work and the opportunity to collaborate on this project.
\section{Preliminaries}\label{sec:prelim}
\subsection{Basic definitions in arithmetic dynamics} As we are interested in the behavior of points under iteration of rational maps, we begin this section by defining terms for the possible behavior. Let $K$ be a field and $f(z) \in K(z)$. We say a point $\alpha$ is \emph{periodic} for $f$ if there exists some positive integer $n$ such that $f^n(\alpha) = \alpha$. Note that if $n= 1$ then $\alpha$ is a \emph{fixed point} for $f$. If $n$ is the smallest positive integer such that $f^n(\alpha) = \alpha$ then $\alpha$ is periodic of \emph{exact period $n$}.  If there exist distinct positive integers $n,m$ such that $f^n(\alpha) = f^m (\alpha)$, we say $\alpha$ is a \emph{preperiodic point} for $f$. If $\alpha$ is preperiodic but not periodic, then we say it is \emph{strictly preperiodic.} If $\alpha$ is not periodic or preperiodic, so it has an infinite forward orbit under $f$, we say $\alpha$ is a \emph{wandering point}. 

Periodic points can have an effect on the dynamical behavior of nearby points.

\begin{definition}
    Let $\alpha$ be a point of exact period $n$, so $f^n(\alpha) = \alpha$ and $f^m(\alpha)\neq \alpha$ for any $0<m<n$. Then the \emph{multiplier} of $\alpha$ for $f$ is 
    \[
    \lambda_f(\alpha) := (f^n)'(\alpha).
    \]
\end{definition}
The multiplier is invariant under iteration, so all points in the orbit of a periodic point will have the same multiplier. 

We say that $\alpha$ is repelling for $f$ if $|\lambda_f(\alpha)|>1$, attracting if $|\lambda_f(\alpha)|<1$, and neutral if $|\lambda_f(\alpha)|=1$. One can think of the multiplier of $\alpha$ as an indicator of the behavior of nearby points under iteration of $f$. For example, if $\alpha$ is a repelling periodic point, then there exists a neighborhood of $\alpha$ in which the points move away from $\alpha$ under iteration of $f$, and if $\alpha$ is attracting, then there exists a neighborhood of $\alpha$ in which the points move closer to $\alpha$ with each iteration of $f$. 

We say that $f(z) \in K(z)$ is \emph{conjugate} to a rational map $g$ if there exists $\varphi(z) \in \PGL_2(\bar{K})$ such that 
\[g = f^\varphi:= \varphi^{-1}\circ f \circ \varphi.\]

Conjugation commutes with iteration so if $\alpha$ is a periodic point for $f$ then $\varphi^{-1}(\alpha)$ is periodic point (of the same period) for $f^\varphi$. Similarly, critical points for $f$ correspond to critical points for $f^\varphi$. In particular, if $f$ is a PCF (or PCB) polynomial than $f^\varphi$ is PCF (resp. PCB) for any $\varphi\in\PGL_2(\bar{K})$. In this way conjugation respects many dynamical properties and allows us to investigate the dynamics of a map by choosing a convenient representative from its conjugacy class. For example, every quadratic polynomial is conjugate to exactly one polynomial of the form $z^2+c$, which is why polynomials of that form are used in the definition of the classical Mandelbrot set. 

\subsection{Julia sets}

For a rational map $f\in \CC(z)$, the Julia set of $f$ has many equivalent definitions. 
Among others, the \emph{Julia set} of $f$ is characterized as 
\begin{itemize}
    \item The closure of the repelling periodic points.
    \item The smallest closed set, $\mathcal{J}$, in $\CC$ containing at least 3 points that is completely invariant under $f$. That is, if $f(\alpha) \in \mathcal{J}$ and $f^{-1}(\alpha) \in \mathcal{J}$ for any $\alpha \in \mathcal{J}$.

\end{itemize}

For a polynomial $f$, the \emph{filled Julia set} is the set of points with bounded forward orbits, and the Julia set is the boundary of the filled Julia set. Thus, a point is in the Julia set if its forward orbit is bounded but there are arbitrarily close points whose orbits tend to infinity.
The parameters $c$ that belong to the complex Mandelbrot set are also precisely those for which the Julia set of $f_c$ is connected. 

\subsection{Notation}

Throughout the paper we will write $|\cdot |$ to denote the normalized $2$-adic absolute value and $\overline{D}(a,\delta)$ to denote the closed disk in $\mathbb{C}_2$ centered at $a$ of radius $\delta$. We recall that $z \in \overline{D}(a,2^{-r})$ if and only if $z \equiv a \mod {2^{r}}$.

\section{A point on the cubic $2$-adic Mandelbrot Set}\label{sec: mandelset}

 In this section we analyze the parameter space associated to the one-parameter family of cubic polynomials of the form $f_t(z) = -\frac32 t (-2z^3+3z^2)+1$ in $\mathbb{C}_2[z]$. Recall that in this family, the critical points are $0$ and $1$, and $f_t(0)=1$, so there is only one critical orbit to consider. We are concerned with which parameters $t$ belong to the Mandelbrot set for this family; in other words, for which $t$ values is the critical orbit bounded? We first note that if $|t|>1$, the critical orbit is always unbounded, as $|f_t(1)| = |1-\frac32 t| = 2|t|>2$, and all points of absolute value greater than 2 have unbounded orbits. If $|t| \leq \frac12$, on the other hand, the coefficients of $f_t$ are all inside the $2$-adic unit disk, and so all points in the unit disk stay bounded under iteration of $f_t$. So, all $t$ with $|t| \leq \frac12$ belong to the Mandelbrot set for this family, while all $t$ with $|t| >1$ do not belong to the Mandelbrot set. We  examine the case where $|t|=1$.
                        
 The proof of the main theorem lies in the statements of Theorems \ref{thm: unbdd} and \ref{thm: bdd} below. Theorem \ref{thm: unbdd} establishes that there are values of $t$ arbitrarily close to $t=1$ such that the map $f_t(z)$ is post-critically bounded, while Theorem \ref{thm: bdd} establishes the reverse: that there are values of $t$ arbitrarily close to $t=1$ for which the map $f_t(z)$ is post-critically unbounded. In Figure \ref{fig:ourMandelbrotset} below, we show the structure of the slice of the $2$-adic, degree $3$ Mandelbrot set for this family. Each node corresponds to a disk of $t$ values centered at the given value with radius $2^{-r}$, where $r$ is indicated by the power of 2 in the modulus in the right column. The nodes are ovals if the critical orbit of the corresponding polynomials are known to be unbounded, and squares if they are known to be bounded. 

\begin{figure}[h]
\centering
\includegraphics[width=0.8\textwidth]{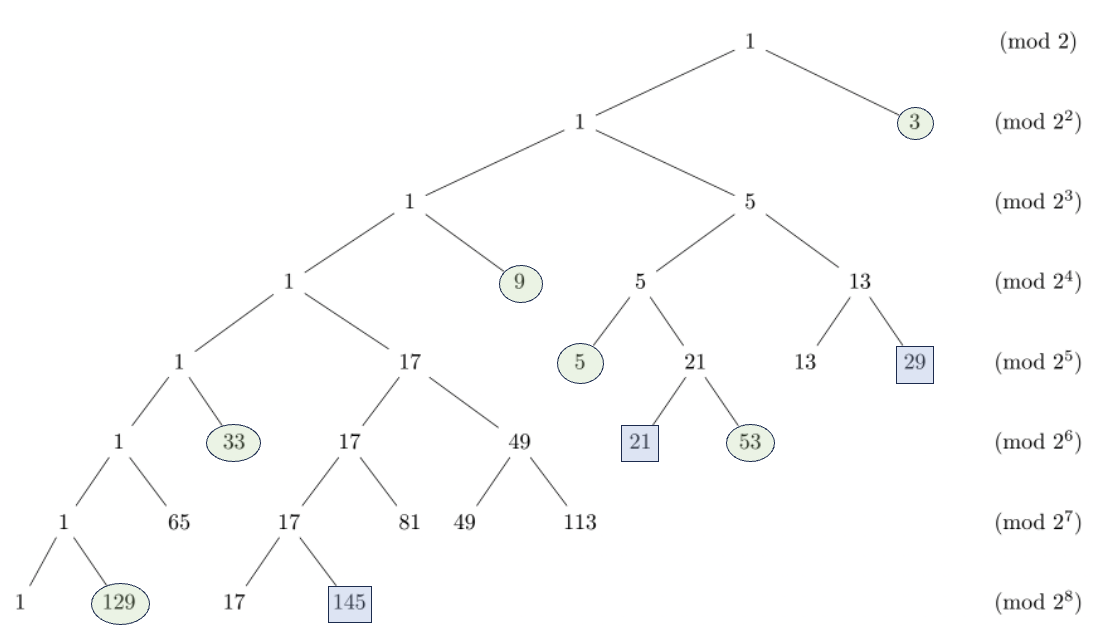}
\caption{} 
\label{fig:ourMandelbrotset}

\end{figure}

We begin with a lemma in which we build on the fact that $-\frac{1}{2}$ is a repelling fixed point for $f_1(z)$, which implies that points sufficiently close to $-\frac{1}{2}$ move away from $-\frac{1}{2}$ by a fixed amount with each iteration of $f_1$. For parameters $t$ sufficiently close to $1$, we are able to show that the distance between certain points and $-\frac{1}{2}$ increases by a factor of $4$ upon each iteration of $f_t$.

\begin{lem} \label{lemma1}
Suppose that $t \in \overline{D}\left(1+2^{2n-1},2^{-2n}\right)$ for some $n \geq 1$ and assume $|z+\frac{1}{2}|=2^k$, where $-2n<k<1$. Then $|f_t(z)+\frac{1}{2}|=2^{k+2}$. 
\end{lem}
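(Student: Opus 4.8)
The plan is to compute $f_t(z) + \frac12$ directly under the hypotheses and track $2$-adic valuations term by term. First I would write $z = -\frac12 + u$ where $|u| = 2^k$, so that the hypothesis $-2n < k < 1$ tells me $|u| \le \frac12$ (since $k \le 0$) and $|u| \ge 2^{-2n+1}$. I would then substitute into $f_t(z) = -\frac32 t(-2z^3 + 3z^2) + 1 = 3tz^3 - \frac92 t z^2 + 1$ and expand in powers of $u$ around $z = -\frac12$. Since $-\frac12$ is a fixed point of $f_1$, the constant term of $f_t(-\frac12 + u) + \frac12$ will be $f_t(-\frac12) + \frac12$, which vanishes when $t = 1$ and otherwise is controlled by $|t - 1|$; the hypothesis $t \in \overline{D}(1 + 2^{2n-1}, 2^{-2n})$ means $t - 1 \equiv 2^{2n-1} \pmod{2^{2n}}$, so $|t-1| = 2^{-(2n-1)}$ exactly.

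The key computation is to organize $f_t(-\tfrac12 + u) + \tfrac12$ as $A \cdot u + (\text{higher order in } u) + (\text{terms carrying } t - 1)$, where $A = f_1'(-\tfrac12)$ is the multiplier at the fixed point. I would compute $f_1'(z) = 9tz^2 - 9tz$ evaluated at $t=1$, $z = -\frac12$, getting $f_1'(-\tfrac12) = 9(\tfrac14) + 9(\tfrac12) = \tfrac{27}{4}$, which has absolute value $|{\tfrac{27}{4}}| = 4$. So the linear term $A u$ has absolute value $4 \cdot 2^k = 2^{k+2}$. The remaining steps are to show every other term in the expansion is strictly smaller in absolute value than $2^{k+2}$, so that the ultrametric inequality gives $|f_t(z) + \tfrac12| = 2^{k+2}$ exactly. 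The quadratic and cubic terms in $u$ will be roughly $2^{2k}$ and $2^{3k}$ times bounded constants; since $k < 1$, i.e. $k \le 0$, we have $2k < k < k+2$ and these are dominated. The terms involving $t - 1$ need the bound $|t-1| = 2^{-(2n-1)}$ together with $k > -2n$: a term like $(t-1)$ times a unit has absolute value $2^{-(2n-1)} = 2^{1-2n} < 2^{k+2}$ precisely because $k > -2n \Rightarrow 1 - 2n < k + 1 < k + 2$, and similarly for $(t-1)u$, $(t-1)u^2$, etc., which are even smaller.

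The main obstacle I anticipate is bookkeeping rather than conceptual: one must be careful that the \emph{exact} valuation of $t - 1$ (not just an upper bound) is what makes the "error" terms strictly smaller, and one must check the boundary-case exponents carefully — in particular that $k < 1$ really means $k \le 0$ in $\ZZ$, and that the worst case $k = 0$ still leaves the quadratic term $|{-\tfrac92 t}| |u|^2 = 2 \cdot 2^{2k} = 2$ strictly below $2^{k+2} = 4$. I would also double-check the coefficient $A = \tfrac{27}{4}$ and the precise form of the $u^2$ and $u^3$ coefficients (they involve $t$, so I would replace $t$ by $1 + (t-1)$ and absorb the $(t-1)$-parts into the error terms). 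Once all terms are seen to have absolute value strictly less than $2^{k+2}$ except the linear term, which has value exactly $2^{k+2}$, the ultrametric equality $|f_t(z) + \tfrac12| = 2^{k+2}$ follows immediately.
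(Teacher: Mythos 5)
Your proposal is correct and follows essentially the same route as the paper: both isolate the part of $f_t(z)+\tfrac12$ proportional to $z+\tfrac12$ (with dominant factor of absolute value $|\tfrac{27}{4}|=4$), bound the $(t-1)$ error by $2\cdot 2^{-(2n-1)}$, and conclude by the ultrametric inequality using $k>-2n$; the paper merely packages your linear, quadratic, and cubic terms into the single factor $(z+\tfrac12)\cdot 3t(z-1)^2$ and evaluates its absolute value at once via $|z|=|z-1|=2$. One small caution: since $z\in\CC_2$, the exponent $k$ need not be an integer, so your reduction ``$k<1$ means $k\le 0$'' is unwarranted --- but harmless, as the inequalities you actually need ($2k<k+2$, $3k<k+2$, and $1-2n<k+2$) hold for every real $k$ with $-2n<k<1$.
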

\begin{proof}
First, note that $f_t(z)+\frac{1}{2} = (z+\frac{1}{2})(3tz^2-6tz+3t)-\frac{3}{2}(t-1)$. This gives us that 
\begin{align*}
\left|f_t(z)+\tfrac{1}{2}\right| &\leq \max \lbrace |z+\tfrac{1}{2}|\cdot |3t| \cdot |z^2-2z+1|, |\tfrac{3}{2}|\cdot |t-1|\rbrace\\
&= \max \lbrace 2^k|z-1|^2, 2\cdot 2^{-(2n-1)}\rbrace,
\end{align*}
with equality if the values for which we are taking the maximum of are distinct. 
Since $|z+\tfrac{1}{2}| = 2^k<2$, it must be the case that $|z|=2$ and therefore $|z-1|=2$, so we have that 
\[
\max \lbrace 2^k|z-1|^2, 2\cdot 2^{-(2n-1)}\rbrace = \max \lbrace 2^{k+2}, 2^{-2n+2}\rbrace = 2^{k+2}>2^{-2n+2}.
\]

Thus, by the strong triangle inequality, we have that $\left|f_t(z)+\frac{1}{2}\right| = 2^{k+2}$, as desired.  
\end{proof}

    In Theorem \ref{thm: unbdd} we establish a repeating pattern for $2$-adic disks arbitrarily close to $t=1$ yielding post-critically unbounded polynomials. This pattern is highlighted in Figure \ref{fig:repeatedmandelset} with the highlighted disks representing discs of $t$-values for which $f_t$ is post-critically unbounded.

\begin{figure}[h]
    \centering
    \includegraphics[width=0.8\textwidth]{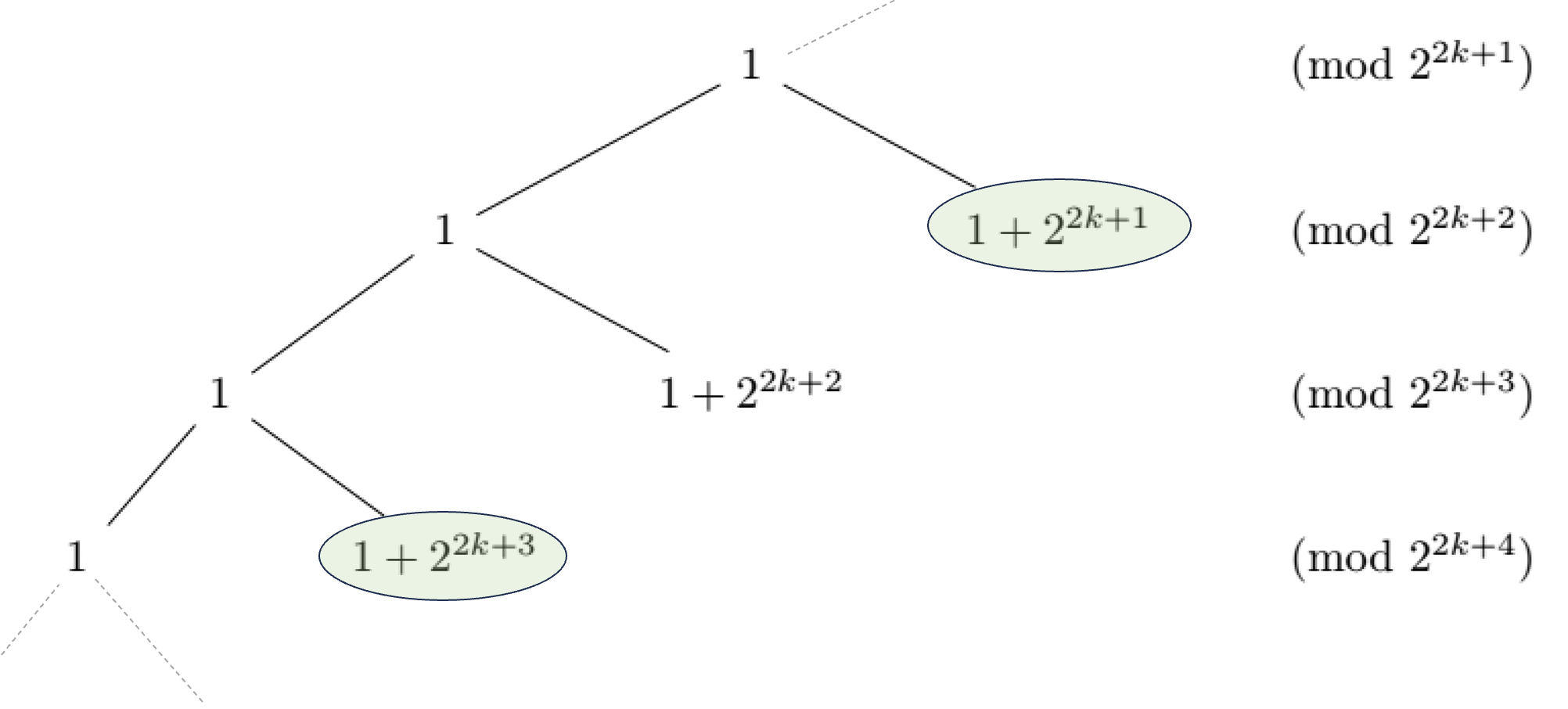}
    \caption{}
    \label{fig:repeatedmandelset}
\end{figure}

\begin{thm}\label{thm: unbdd}

If $t \in \overline{D}(1+2^{2n-1}, 2^{-2n})$ for $n\geq 1$ then $f_t(z) = -\frac{3}{2} t (-2z^3+3z^2)+1$ is post-critically unbounded. 

\end{thm}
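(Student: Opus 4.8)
The plan is to start the critical orbit at $f_t(0)=1$ and track how far the iterates sit from the repelling fixed point $-\frac12$, then invoke Lemma~\ref{lemma1} repeatedly until an iterate escapes the region where that lemma applies. First I would compute $|f_t(1)+\frac12| = |\frac32 - \frac32 t| = \frac32|t-1|$; since $t\in \overline{D}(1+2^{2n-1},2^{-2n})$ we have $|t-1| = 2^{-(2n-1)}$, so $|f_t(1)+\frac12| = 2^{-2n+2} = 2^{k_0}$ with $k_0 = -2n+2 = -2(n-1)$. (I'd handle $n=1$ separately, where $f_t(1)$ already has absolute value $2$ and the orbit escapes immediately since points of absolute value $>1$ — indeed $\geq 2$ — have unbounded orbit; more carefully $|f_t(1)+\tfrac12|=2$ forces $|f_t(1)|=2$ directly.) For $n\geq 2$, the first relevant iterate $z_1 := f_t(1)$ satisfies $|z_1+\frac12| = 2^{k_0}$ with $-2n < k_0 < 1$, so Lemma~\ref{lemma1} applies.

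The main step is then an induction: set $z_j = f_t^{j}(1)$ for $j\geq 1$, and suppose $|z_j+\frac12| = 2^{k_0 + 2(j-1)}$ as long as the exponent stays in the open interval $(-2n, 1)$. Since $k_0 = -2n+2$, the exponent after applying the lemma $j-1$ times is $-2n+2j$, which is $<1$ precisely when $j < n + \frac12$, i.e. $j \leq n$, and is $>-2n$ automatically for $j\geq 1$. So Lemma~\ref{lemma1} pushes the orbit through exponents $-2n+2, -2n+4, \dots$ and after enough iterations we reach $j$ with $-2n+2j \geq 1$. The first such $j$ is $j=n$, giving $|z_n + \frac12| = 2^{-2n+2n} = 2^{0} = 1$ — wait, that is still $<2$, so I should push once more: Lemma~\ref{lemma1} requires $k<1$, and $k=0$ qualifies, yielding $|z_{n+1}+\frac12| = 2^{2}$. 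Then $|z_{n+1}+\frac12| = 4 > \frac12$ forces $|z_{n+1}| = 4 > 2$, and since every point of absolute value greater than $2$ has unbounded forward orbit under $f_t$ (as noted at the start of this section, using that the leading behavior dominates: $|f_t(w)| = |3t w^3| = |w|^3 > |w|$ when $|w|>2$), the critical orbit is unbounded.

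The one technical point that needs care is making sure each intermediate iterate $z_j$ genuinely lies in the hypothesis range of Lemma~\ref{lemma1}, namely that the exponent $k_j := -2n+2j$ satisfies $-2n < k_j < 1$ for all $j$ from $1$ up to the last application. The lower bound is immediate since $k_j \geq k_1 = -2n+2 > -2n$; the upper bound $k_j < 1$ holds for $j \leq n$ (where $k_n = 0 < 1$), and the final application at $j=n$ (exponent $k=0<1$) produces exponent $2$, at which point we exit the lemma and conclude by escape. I also need the base case $j=1$: that $z_1 = f_t(1)$ has $|z_1 + \frac12|$ equal to $2^{-2n+2}$ and not merely bounded by it — but this is an exact equality since $f_t(1)+\frac12 = \frac32(1-t)$ and $|\frac32| = 1$. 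The hard part, such as it is, is purely bookkeeping: verifying the exponent stays in range throughout, and treating the smallest cases ($n=1$, and confirming the count of iterations needed) so that the induction terminates at an escaping iterate rather than stalling at absolute value $1$.
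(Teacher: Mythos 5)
Your proposal follows essentially the same route as the paper's proof: compute $|f_t(1)+\tfrac12| = 2^{-2n+2}$, apply Lemma~\ref{lemma1} repeatedly until $|f_t^{n+1}(1)+\tfrac12| = 4$, and conclude that $|f_t^{n+1}(1)| = 4 > 2$ forces escape. Two slips in the write-up should be repaired, though neither sinks the argument. First, you treat $|\tfrac32|$ inconsistently: you write $|f_t(1)+\tfrac12| = \tfrac32|t-1|$ in one place and assert $|\tfrac32|=1$ in another, but in fact $|\tfrac32|_2 = 2$, and this is precisely what gives the exact value $|f_t(1)+\tfrac12| = 2|1-t| = 2^{-2n+2}$ that you (correctly) use. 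The same care is needed when you pass from $|z_{n+1}+\tfrac12|=4$ to $|z_{n+1}|=4$: the comparison is $4 \neq |\tfrac12|_2 = 2$, not ``$4 > \tfrac12$.'' Second, the separate treatment of $n=1$ is both unnecessary and incorrect as stated: for $n=1$ one has $|f_t(1)+\tfrac12| = 1$ (not $2$), so $k_0 = 0$ lies in the interval $(-2n,1)$ and a single application of Lemma~\ref{lemma1} already yields $|f_t^2(1)+\tfrac12| = 4$; moreover, your claimed shortcut that points of absolute value $\geq 2$ escape is false --- the fixed point $-\tfrac12$ itself has absolute value $2$ --- so knowing $|f_t(1)|=2$ proves nothing. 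Delete the special case and let the general induction run from $n=1$, exactly as the paper does.
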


\begin{proof}
Since $f_t(1) +\frac{1}{2} = \frac{3}{2}(1-t)$ then for $t \in \overline{D}(1+2^{2n-1}, 2^{-2n})$ we have that 
\[ |f_t(1) + \tfrac12| = 2|1-t| = 2^{1-(2n-1)} = 2^{-2n+2}.\]
Then, repeatedly applying Lemma~\ref{lemma1} shows the following (for $m \leq n+1$):
\[ \left|f_t^m(1) + \tfrac12\right| = 2^{-2n+2m}.\]
In particular, $|f_t^n(1)+\frac12| = 1$, and $|f_t^{n+1}(1)+\frac12| = 4$. This implies that $|f^{n+1}(1)| = 4>2$. Any point of absolute value greater than 2 will have an orbit that escapes to infinity, and so the orbit of the critical point $1$ is unbounded.
\end{proof}

The previous theorem gives a sequence of disks getting arbitrarily close to $t=1$ whose elements all correspond to polynomials with unbounded critical orbits. The next theorem shows that there is a different sequence of disks, also getting arbitrarily close to $t=1$, whose elements all correspond to PCB polynomials. This establishes $t=1$ as a boundary point (and not an isolated point) in the Mandelbrot set for this family.

\begin{thm}\label{thm: bdd} There exists a sequence $\{t_n\}_{n=3}^\infty$ such that $\displaystyle\lim_{n \to \infty} t_n = 1$ and $f_{t_n}$ is PCF with a periodic critical orbit of length $n$. Moreover, for $t\in \overline{D}(t_n,2^{8n/3+2}$ the polynomial $f_t(z)$ is PCB. 
\end{thm}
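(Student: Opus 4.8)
The plan is in two parts: exhibit the parameters $t_n$ as solutions near $t=1$ of the post-critical-finiteness equation, and then upgrade this to post-critical boundedness on a small disk about each $t_n$ by showing the critical orbit is trapped by the resulting superattracting cycle. For the first part, note that since $f_t(0)=1$ for every $t$ and the only critical points are $0$ and $1$, the critical orbit of $f_t$ is $\{0,1,f_t(1),f_t^2(1),\dots\}$, so it is a cycle of exact length $n$ precisely when $f_t^{n-1}(1)=0$ while $f_t^{j}(1)\neq 0$ for $0\le j\le n-2$; in that case the cycle $0\to 1\to f_t(1)\to\dots\to f_t^{n-2}(1)\to 0$ contains both critical points and is automatically superattracting. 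Write $t=1+s$ and $w_j:=f_t^{j}(1)+\tfrac12$, so $w_1=-\tfrac32 s$; then for $v_2(s)=2n-4$ a minor variant of Lemma~\ref{lemma1} (its proof does not rely on the exact parity of $v_2(s)$) gives $|f_t^{j}(1)-1|=2$ and hence $|w_{j+1}|=4|w_j|$ for $1\le j\le n-2$, so $v_2(w_j)=v_2(s)-2j+1$ for $1\le j\le n-1$, and in particular $v_2(w_{n-1})=-1=v_2(\tfrac12)$.

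Next I would show that on the disk $v_2(s)\ge 2n-4$ the polynomial $w_{n-1}(s)$ is dominated by its linear term, $w_{n-1}(s)=c_n\,s\,(1+\epsilon(s))$, where $v_2(c_n)=-2n+3$ (the factor $-\tfrac32$ from $w_1$ together with $n-2$ local multipliers of absolute value $4$ near the repelling fixed point $-\tfrac12$) and $v_2(\epsilon(s))>0$. The equation $w_{n-1}(s)=\tfrac12$ then becomes the fixed-point equation $s=\bigl(2c_n(1+\epsilon(s))\bigr)^{-1}$, whose right-hand side carries the shell $\{v_2(s)=2n-4\}$ to itself and contracts it, so it has a unique root $s_n$; set $t_n:=1+s_n$. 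By construction $f_{t_n}^{n-1}(1)=0$, while for $1\le j\le n-2$ one has $v_2\bigl(f_{t_n}^{j}(1)+\tfrac12\bigr)\ge 1$, hence $f_{t_n}^{j}(1)\neq 0$, so the critical cycle has exact length $n$; and since $v_2(t_n-1)=2n-4\to\infty$, also $t_n\to 1$.

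For the ``moreover'' clause, fix $t$ with $\mu:=v_2(t-t_n)$ large; as $\mu>2n-4=v_2(t_n-1)$ we still have $v_2(t-1)=2n-4$, so the orbit tracking above applies verbatim to $f_t$ through step $n-1$, and $\beta_t:=f_t^{n-1}(1)=w_{n-1}(t)-\tfrac12=w_{n-1}'(s_n)\,(t-t_n)+O\bigl((t-t_n)^2\bigr)$ has $v_2(\beta_t)=\mu-2n+3$, so $\beta_t$ lies very close to the critical point $0$. Hence $f_t^{n}(1)=f_t(\beta_t)=1+t\,\beta_t^{2}\bigl(3\beta_t-\tfrac92\bigr)$ satisfies $v_2\bigl(f_t^{n}(1)-1\bigr)=2\mu-4n+5$. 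Since $z=1$ is a critical point of $f_t$, hence of $f_t^{n}$, for \emph{every} $t$, near $z=1$ we have $v_2\bigl(f_t^{n}(z)-f_t^{n}(1)\bigr)=2v_2(z-1)+v_2\bigl(\tfrac12(f_t^{n})''(1)\bigr)$, and chasing a displacement once around the cycle — squared at $z=1$, multiplied by $4^{n-2}$ over the $n-2$ steps near $-\tfrac12$, and multiplied by $|\beta_t|$ at the step near $0$ — gives $v_2\bigl(\tfrac12(f_t^{n})''(1)\bigr)=\mu-4n+6$. Thus $f_t^{n}$ maps $\overline{D}(1,2^{-\nu})$ into itself for every integer $\nu$ with $4n-6-\mu\le\nu\le 2\mu-4n+5$, and such $\nu$ exists exactly when $3\mu\ge 8n-11$; this (with a little slack absorbed into the linearizations above) is the source of the exponent $8n/3$ in the radius. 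Fixing such a $\nu$, the $f_t^{n}$-orbit of $1$ stays in $\overline{D}(1,2^{-\nu})$, so the full $f_t$-orbit of $1$ lies in $\bigcup_{i=0}^{n-1}f_t^{i}\bigl(\overline{D}(1,2^{-\nu})\bigr)$, a finite union of bounded sets, and $f_t$ is PCB.

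The main obstacle is quantitative rather than structural, with two facets. In the first part one must control the higher-order terms of $w_{n-1}(s)$ well enough to know it genuinely behaves like $c_n s$ throughout the disk $v_2(s)\ge 2n-4$; this is what legitimizes both the contraction step and the value $v_2\bigl(w_{n-1}'(s_n)\bigr)=-2n+3$ used in the second part. In the second part the delicate point is the valuation bookkeeping for the return map $f_t^{n}$, which super-contracts at the two critical points $0$ and $1$ but expands by $4$ at each of the $n-2$ cycle points near $-\tfrac12$; arranging that contraction wins over a full lap is precisely what produces the threshold $3\mu\ge 8n-11$, hence the radius appearing in the statement.
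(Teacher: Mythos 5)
Your proposal is correct in its overall strategy and lands exactly where the paper does, but it packages both halves differently, so a comparison is worthwhile. For the existence of $t_n$, the paper works with the polynomial $g_n(s)=f_{s+1}^n(0)$ and proves by induction that its Newton polygon has a segment from $(0,-1)$ to $(1,3-2n)$, which immediately yields a root of valuation $2n-4$; you instead run the dynamical tracking (your parity-free variant of Lemma~\ref{lemma1} is legitimate: the proof only uses $|t-1|$, and your $k$-values $2j+3-2n$ stay in the admissible range) to predict the valuation of the linear coefficient, and then invoke a contraction/Hensel-type argument on the shell $v_2(s)=2n-4$. These are two faces of the same coin: your step ``$w_{n-1}(s)$ is dominated by its linear term on $v_2(s)\ge 2n-4$'' is precisely the statement that the higher coefficients lie above the Newton polygon segment, i.e.\ the content of the paper's induction ($|a_1|=2^{2n-3}$, $|a_i|\le 2^{1-(4-2n)i}$), so that estimate still has to be carried out; you have correctly identified it as the remaining work rather than avoided it. One point in your favor is that you verify the period is exactly $n$, which the paper leaves implicit. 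For the ``moreover'' clause, the paper tracks a disk $\overline{D}(0,2^{-k})$ with $k\approx\tfrac23 n$ once around the cycle (radius squares at the critical point, quadruples at each of the $n-2$ steps near $-\tfrac12$), while you center at the other critical point $1$ and phrase the same computation through the second Taylor coefficient of the return map $f_t^n$; your valuations ($v_2(\beta_t)=\mu-2n+3$, $v_2(f_t^n(1)-1)=2\mu-4n+5$, $v_2(\tfrac12(f_t^n)''(1))=\mu-4n+6$) all check out and reproduce the $\tfrac83 n$ threshold. The one genuine omission is that invariance of $\overline{D}(1,2^{-\nu})$ under $f_t^n$ is not governed by the quadratic coefficient alone: because the orbit passes close to the second critical point $0$ at step $n-1$, the quartic coefficient of $f_t^n$ at $1$ contributes a term of valuation $4\nu-4n+5$, imposing the extra constraint $\nu\ge\tfrac{4n-5}{3}$ alongside your $4n-6-\mu\le\nu\le 2\mu-4n+5$. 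This is harmless for $\mu\ge\tfrac83 n+2$ (take $\nu\approx\tfrac43 n$, matching the paper's $2k-1$), but it should be stated; the paper's step-by-step disk tracking sidesteps the issue because the image of a disk is computed afresh at each step.
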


\begin{proof} 
Consider the polynomial in $s$ defined by $g_n(s) = f_{s+1}^n(0)$. The roots of $g_n$ are parameters $s$ for which 0 is periodic of period $n$ under iteration of $f_{s+1}$. We'll show that the Newton polygon for $g_n$ has vertices at $(0,-1)$ and $(1,3-2n)$, which implies that there exists a root $s_n$ of $g_n$ of absolute value $2^{4-2n}$. Setting $t_n=s_n+1$, we have a sequence of $t$ values approaching 1 for which the critical orbit is periodic, and therefore bounded.

We will prove this by induction. First, take $n=3$. In this case, we can explicitly compute $g_3(s)$:
\[ g_3(s) = -\tfrac12 - \tfrac{93}{8}s-\tfrac{243}{8}s^2-\tfrac{243}{8}s^3-\tfrac{81}{8}s^4.\]
The leftmost segment of the Newton polygon for this polynomial has slope $-2$, as desired. This implies that $g_3$ has a root $s_3$ of absolute value $\frac14$. Take $t_3 = 1+s_3$.

For the inductive step, suppose that $g_n(s)$ is of the following form:
\[ g_n(s) = -\tfrac12+ \sum_{i=1}^{3n-5} a_i s^i,\]
where $|a_1| = 2^{2n-3}$ and $|a_i| \leq 2^{1-(4-2n)i}$ for $i>1$. This is what is necessary to produce the desired Newton polygon properties. We must show that $g_{n+1}(s)$ is of the form
\[ g_{n+1}(s) = -\tfrac12 + \sum_{i=1}^{3n-2} b_i s^i,\]
where $|b_1| = 2^{2n-1}$ and $|b_i| \leq 2^{1-(2-2n)i}$ for $i>1$. 

We compute $g_{n+1}(s)$ from $g_n$:
\[ g_{n+1}(s) = -\tfrac32(s+1)\left( -\tfrac12 + \sum_{i=1}^{3n-5} a_i s^i \right)^2 \left(3-2\left( -\tfrac12 + \sum_{i=1}^{3n-5} a_i s^i \right) \right) + 1
\]

\[ =-\tfrac32(s+1)\left( -\tfrac12 + \sum_{i=1}^{3n-5} a_i s^i \right)^2 \left(4 - \sum_{i=1}^{3n-5} 2a_i s^i \right) + 1  \] 
\[= \left(-3s-3\right) \left( -\tfrac12 + \sum_{i=1}^{3n-5} a_i s^i \right)^2 \left(2 - \sum_{i=1}^{3n-5} a_i s^i \right) +1
\]

By expanding the above, we see that the constant term is $-\frac12$, and the degree 1 term is 
\[b_1 = \tfrac{27}{4}a_1 -\tfrac32,\]
which has the desired absolute value.

It remains to argue that for $2 \leq m \leq 3n-2$, the coefficient $b_m$ has absolute value less than or equal to $2^{1-(2-2n)m}$. Carefully considering the expansion of the expression for $g_{n+1}(s)$ above, we see that the terms of degree $m$ have coefficients of the form $c a_i a_j a_k$, where $i+j+k=m$ or $i+j+k=m-1$ and $|c| \leq 4$. This implies
\[ |b_m| \leq 4 \cdot 2^{1-(4-2n)i} \cdot2^{1-(4-2n)j} \cdot2^{1-(4-2n)k} \leq 2^{5-(4-2n)m} \leq 2^{1-(2-2n)m},
\]
as desired. 

Now that we have established the existence of a sequence $\lbrace t_n\rbrace_{n=3}^\infty$ with $\lim\limits_{n\rightarrow \infty}t_n= 1$ such that $f_{t_n}$ has a periodic critical orbit of length $n$, we will show that values of $t$ in disks centered at $t_n$ will correspond to PCB polynomials.  
Specifically, by tracking the precision of a small disk around $t_n$ as we iterate, we see that if $|t-t_n| \leq 2^{-(\frac83 n+2)}$, then the critical orbit for $f_t$ will be bounded.

Write $t=t_n+2^m v$, where $m \geq \frac83 n + 2$ and $|v| \leq 1$. Let $\{0, 1, c_2, c_3, \dots, c_{n-1}\}$ denote the periodic critical orbit for $f_{t_n}$. Letting $k$ be the integer nearest to $\frac23 n$, we will show that $f_t^n$ will map a disk centered at 0 of radius $2^{-k}$ into itself, and thus the critical orbit is bounded. If $z \in \bar{D}(0,2^{-k})$, then $z=2^ku$ for some $u$ with $|u| \leq 1$. Then,
\[ f_t(z) = 1 -\tfrac32 (t_n +2^m v)(z^2)(3-2z) = 1-\tfrac32 (t_n + 2^m v)(2^{2k}u^2)(3-2^{k+1}u) \] 
\[\equiv 1 \pmod{2^{2k-1}}. \]

So $f_t(\bar{D}(0,2^{-k})) \subset \bar{D}(1,2^{-(2k-1)})$.

Looking at the next iterate, we see that $f_t(\bar{D}(1,2^{-(2k-1)})) \subset \bar{D}(c_2, 2^{-(4k-1)})$ by taking $z=1+2^{2k-1}u$ for some $u$ such that $|u| \leq 1$ and computing $f_t(z)$:
\begin{align*}
    f_t(1+2^{2k-1}u) &= 1-\tfrac32(t_n+2^m v)(1+2^{2k}u+2^{4k-2}u^2)(1-2^{2k}u)\\
    &= 1-\tfrac32(t_n+2^m v)(1-2^{4k}u^2+2^{4k-2}u^2-2^{6k-2}u^3)\\
    &\equiv 1- \tfrac32 t_n \pmod{2^{4k-1}}.
\end{align*}

Since $c_2 = f_t(1) =1-\frac{3}{2}t_n$, we have that $f_t(1+2^{2k-1}u) \equiv c_2 \pmod {2^{4k-1}}$.

Continuing to track iterates from here, the radius of the disk will expand by a factor of 4 each time. We can see this by showing that if $|z-z_n| \leq 2^{-r}$, $|z| \leq 2$ and $r \leq m-1 $ then $|f_t(z) - f_{t_n}(z_n)| \leq 2^{-(r-2)}$. Write $z=z_n + 2^r u$ for some $u$ such that $|u| \leq 1$. Then, 
\[ f_t(z_n + 2^r u) = 1-\tfrac32(t_n+2^m v)(z_n^2+2^{r+1}z_n u+2^{2r}u^2)(3-2z_n-2^{r+1}u) \]
\[ \equiv 1-\tfrac32 t_n (z_n^2)(3-2 z_n)  \equiv f_{t_n}(z_n) \pmod{2^{r-2}} .\]

Applying this $n-2$ times to $D(c_2,2^{-(4k-1)})$ gives the following:
\[ f_t^n(\bar{D}(0,2^{-k})) \subset \bar{D}(0, 2^{-(4k-1-2(n-2))}).\]
Note that $4k-2n+3 >k$, because this is equivalent to $k>\frac23 n - 1$, and $k$ was chosen to be the nearest integer to $\frac23 n$. So, $f_t^n(\bar{D}(0,2^{-k})) \subset \bar{D}(0,2^{-k})$, and the proof is complete.
\end{proof}

 In addition to the sequence of disks belonging to the Mandelbrot set described in the previous theorem, in Theorem~\ref{thm:q2bdd} we identify two sequences of disks approaching $t=1$ in which all $\mathbb{Q}_2$ points belong to the Mandelbrot set.

\begin{thm}\label{thm:q2bdd} For all integers $n \geq 3$, define the following two sequences of disks:
\[ D_{a,n} = \overline{D}\left(1+5 \cdot 2^{2n}, 2^{-(2n+3)}\right)\]
\[ D_{b,n} = \overline{D} \left(1+7 \cdot 2^{2n}, 2^{-(2n+3)}\right).\]
Then any $t \in \mathbb{Q}_2$ belonging to one of these disks belongs to the Mandelbrot set.
\end{thm}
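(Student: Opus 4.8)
The plan is to mimic the structure of the proof of Theorem~\ref{thm: unbdd}, but instead of showing the critical orbit escapes, I would track the orbit of $0$ under $f_t$ for $t$ in the disks $D_{a,n}$ and $D_{b,n}$ and show it lands on (or is trapped near) a cycle. First I would note that for $t = 1 + c\cdot 2^{2n} + \text{(higher order)}$ with $|t|=1$, the point $f_t(1) + \tfrac12 = \tfrac32(1-t)$ has absolute value $2^{1-2n}$, which is small; iterating Lemma~\ref{lemma1} (whose hypothesis $t\in\overline D(1+2^{2n-1},2^{-2n})$ does \emph{not} apply here, so I would need an analogous computation) would let the distance to $-\tfrac12$ grow by a factor of $4$ per step. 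The key difference from Theorem~\ref{thm: unbdd} is the $2$-adic valuation of $1-t$: here $v_2(1-t) = 2n$ rather than $2n-1$, so after $n$ iterations $|f_t^{n}(1)+\tfrac12| = 2^{1-2n+2n} = 2$ is borderline rather than exceeding $2$, and the next iterate's size depends delicately on the unit part of $t$, i.e.\ on the residue $c \bmod 8$. The restriction to $t\in\QQ_2$ and to residues $c \equiv 5, 7 \pmod 8$ (matching $D_{a,n}$, $D_{b,n}$) is presumably exactly what forces the borderline iterate to stay bounded rather than escape.

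Concretely, I would set $z = f_t^{n}(1)$, so $|z+\tfrac12| = 2$, hence $|z| = 2$ and $z = -\tfrac12 + 2u$ with $|u|=1$. I would compute $f_t(z) + \tfrac12 = (z+\tfrac12)(3tz^2 - 6tz + 3t) - \tfrac32(t-1)$ and analyze the leading term $3t(z+\tfrac12)(z-1)^2$; since $|z+\tfrac12|=2$ and $|z-1|=2$ this has absolute value $2\cdot 4 = 8$, so naively $|f_t(z)+\tfrac12| = 8$, giving $|f_t(z)| = 8 > 2$ and escape — which would be wrong. So the real content must be that at the $n$-th iterate the orbit does \emph{not} in fact reach $|z+\tfrac12| = 2$; instead, the restriction to $\QQ_2$ combined with the residue conditions makes some earlier iterate land in a disk that $f_t$ maps into a bounded region. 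I would therefore re-examine whether, for $c \equiv 5,7 \pmod 8$, there is cancellation making $f_t^{m}(1)$ for some small $m$ either periodic (landing exactly on a fixed point like $-\tfrac12$ or another periodic point) or entering the unit disk in a stable way. The cleanest route: show $f_t^{m}(1) \in \overline D(1, 2^{-r})$ for suitable $m, r$ and then invoke a trapping argument like the end of Theorem~\ref{thm: bdd}, where one checks $f_t$ maps a small disk around a periodic point of $f_{t_0}$ into itself — here with $t_0 = 1$ and the relevant periodic point being $-\tfrac12$ (fixed) or $1 \mapsto -\tfrac12 \mapsto -\tfrac12$.

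The key steps, in order: (1) For $t \in D_{a,n}$ or $D_{b,n}$, compute the valuation of $1-t$ and the first few iterates $f_t(0)=1$, $f_t^2(0)$, \dots, tracking distances to $-\tfrac12$ via a lemma analogous to Lemma~\ref{lemma1} adapted to the valuation $v_2(1-t) = 2n$. (2) Identify the critical iterate where $|f_t^m(1) + \tfrac12|$ would reach $2$, and show that the $\QQ_2$-hypothesis plus the residue condition $c \equiv 5$ or $7 \pmod 8$ forces the unit part of $f_t^m(1)$ to satisfy a congruence that keeps $|f_t^{m+1}(1)| \le 2$ (the strong triangle inequality becoming an inequality rather than equality because of exact cancellation in the leading terms — this is where $5$ and $7$, as opposed to $1$ and $3$, matter, perhaps because $1+5\cdot 4 = 21$ or similar quantities are squares or have special square-root behavior in $\QQ_2$). (3) Once the orbit is shown to stay in $\overline D(0,2) \cup \overline D(-\tfrac12, 2) \cup \overline D(1,2)$, or better in the unit disk, conclude boundedness. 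The main obstacle I anticipate is step (2): understanding precisely why $c \equiv 5, 7 \pmod 8$ (and not all odd residues) and why $t \in \QQ_2$ specifically (rather than all of $\CC_2$) yields boundedness. This smells like a statement about which elements near $1$ are squares in $\QQ_2$ (recall $u \in \ZZ_2^\times$ is a square iff $u \equiv 1 \pmod 8$), suggesting the orbit's fate hinges on whether some expression like $3t$ or $1 - \tfrac32 t$ evaluated along the orbit has a square root in $\QQ_2$, which controls whether a would-be repelling direction actually pushes the point out of the bounded region or folds it back in. I would spend most of the effort making that mechanism explicit, likely via an auxiliary lemma computing $f_t^{m+1}(1) \bmod 2^{\text{large}}$ symbolically in terms of the unit $v$ and the residue $c$.
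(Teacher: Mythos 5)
Your setup is sound: the critical orbit is repelled from $-\tfrac12$ at rate $4$ per iterate, the relevant difference from Theorem~\ref{thm: unbdd} is that $v_2(t-1)=2n$ is even rather than odd, and boundedness must come from the orbit being \emph{captured} somewhere rather than from the growth continuing past $|z|=2$. But the proposal stops exactly where the proof has to begin: it never identifies what captures the orbit. The paper's mechanism is a repelling $2$-cycle (the roots of $6z^2-3z-1$) around which there is an explicit cycle of disks
\[ \overline{D}\left(\tfrac{27}{2},2^{-4}\right)\to\overline{D}\left(2,2^{-2}\right)\to\overline{D}\left(3,2^{-2}\right)\to\overline{D}\left(\tfrac{19}{2},2^{-4}\right)\to\overline{D}\left(3,2^{-2}\right)\to\cdots, \]
and the entire role of the $\QQ_2$ hypothesis is Lemma~\ref{lem:2cycle}: in the step $f_t(3+4k)\equiv\tfrac{19}{2}+8k(k+1)\pmod{16}$, the product $k(k+1)$ is automatically even for $k\in\ZZ_2$, so $\QQ_2$ points gain the extra bit of precision needed to stay in the disk cycle, while generic $\CC_2$ points near the repelling cycle escape. (Your guess about squares mod $8$ is this same fact in disguise, since $4k(k+1)=(2k+1)^2-1$, but you never make it operational.) The residues $5,7\bmod 8$ are exactly those steering $f_t^n(1)$ into $\overline{D}\left(\tfrac{19}{2},2^{-4}\right)$ or $\overline{D}\left(\tfrac{27}{2},2^{-4}\right)$, which sit at distance $2^{-1}$ from $-\tfrac12$; your bookkeeping has the orbit reaching distance $2$ at step $n$, which is off by one, and the capture happens at distance $2^{-1}$, before the naive escape estimate would kick in.

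More seriously, the endgames you propose would fail. You cannot trap an orbit near $-\tfrac12$, which is repelling: a small disk about $-\tfrac12$ maps onto a strictly larger one, so nothing is confined there unless it lands exactly in the backward orbit of $-\tfrac12$, which these critical orbits do not. Nor does the orbit return to $\overline{D}(1,2^{-r})$, and the self-mapping argument at the end of Theorem~\ref{thm: bdd} relies on having a genuinely periodic critical orbit nearby, which is absent here. Finally, even granting the capture mechanism, you would still need the inductive structure of the paper's proof: one checks the base case $n=3$ by direct computation, shows $f_t^3(1)\equiv f_{t_0}^2(1)\pmod{2^{2n-2}}$ for $t\in D_{a,n}$ and $t_0\in D_{b,n-1}$ (and with $a,b$ swapped), and propagates this with Lemma~\ref{lem:tvst0} until the orbit enters the disks of Lemma~\ref{lem:2cycle}. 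Without the $2$-cycle trap and the consecutive-integer parity argument, the proof cannot be completed along the lines you describe.
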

Figure \ref{fig:repeatedmandelset2} shows the self-similar pattern of disks in the slice of the cubic Mandelbrot set described in Theorems \ref{thm: unbdd} and \ref{thm:q2bdd}. The shaded ovals represent disks of $t$-values for which $f_t$ is post-critically unbounded, as in Theorem \ref{thm: unbdd}. The rectangles represent disks whose $\QQ_2$ points described in Theorem \ref{thm:q2bdd} correspond to $t$-values for which $f_t$ is post-critically bounded, and thus are in the cubic $2$-adic Mandelbrot set. 
\begin{figure}[h]
    \centering
    \includegraphics[width=0.8\textwidth]{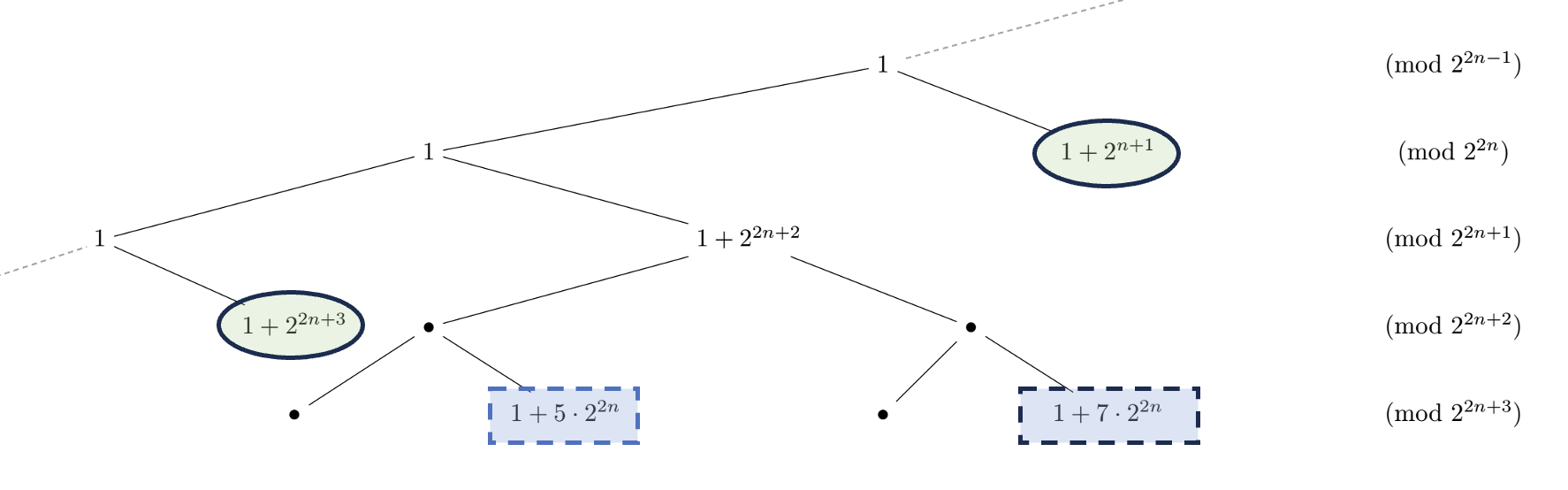}
    \caption{}
    \label{fig:repeatedmandelset2}
\end{figure}

\begin{remark} In Theorem~\ref{thm:q2bdd} above, we must restrict to $t \in \mathbb{Q}_2$ to guarantee that the critical orbit for $f_t$ is bounded. This is because for these $t$ values, the critical orbit eventually ends up near a repelling 2-cycle. Since that cycle is repelling, there are arbitrarily close points whose orbits escape to infinity, but it can be shown that all $\mathbb{Q}_2$ points near that repelling 2-cycle have bounded orbits.
\end{remark}

To prove Theorem~\ref{thm:q2bdd}, we will need a few lemmas. The first establishes that for $t$ values sufficiently close to 1, a $\mathbb{Q}_2$ point whose orbit eventually enters one of two particular disks will have a bounded orbit under iteration of $f_t$.

\begin{lem} \label{lem:2cycle}
Let $t \in \mathbb{Q}_2, t \equiv 1 \pmod{32}$. Then if $z \in \mathbb{Q}_2$ and either $z \in \overline{D}\left(\frac{19}{2}, 2^{-4}\right)$ or $z \in \overline{D}\left(\frac{27}{2},2^{-4}\right)$, $\mathcal{O}_{f_t}(z)$ is bounded.
\end{lem}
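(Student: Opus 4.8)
The plan is to show that the two disks $\overline{D}(\tfrac{19}{2},2^{-4})$ and $\overline{D}(\tfrac{27}{2},2^{-4})$ form (the $\mathbb{Q}_2$-relevant part of) an attracting-type trap for the restriction of $f_t$ to $\mathbb{Q}_2$, even though the underlying $2$-cycle in $\mathbb{C}_2$ is repelling. The first thing I would do is verify the combinatorics of the cycle: compute $f_1$ on these two centers and check that $f_1$ swaps $\tfrac{19}{2}$ and $\tfrac{27}{2}$ (i.e.\ that $\{\tfrac{19}{2},\tfrac{27}{2}\}$ is a genuine $2$-cycle for $f_1$), and then check that the same swap persists at the level of disks for every $t\equiv 1\pmod{32}$, i.e.\ that $f_t(\overline{D}(\tfrac{19}{2},2^{-4}))\subseteq \overline{D}(\tfrac{27}{2},2^{-4})$ and vice versa. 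This is a direct substitution $z = \tfrac{19}{2}+2^{-4}u$, $t = 1 + 32w$ with $|u|,|w|\le 1$, expanding $f_t(z)+\tfrac12$ or $f_t(z)-\tfrac{27}{2}$ and tracking $2$-adic valuations; the congruence $t\equiv 1\pmod{32}$ should be exactly what is needed to kill the error terms coming from $t\ne 1$.

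The key point, and the reason the hypothesis $z\in\mathbb{Q}_2$ is essential, is the following: on $\mathbb{C}_2$ the map $f_t$ expands distance to the $2$-cycle (the multiplier has absolute value $>1$), so generic points escape; but the expansion factor is a power of $2$, so a point of $\overline{D}(\tfrac{19}{2},2^{-4})$ lying in $\mathbb{Q}_2$ and strictly closer to $\tfrac{19}{2}$ than $2^{-4}$ gets pushed \emph{outward} under $f_t^2$ — and I claim it lands back inside the disk of radius $2^{-4}$ rather than escaping, because in $\mathbb{Q}_2$ there is a ``floor'': the valuations are integers, and the disk $\overline{D}(\tfrac{19}{2},2^{-4})$ is the largest disk around $\tfrac{19}{2}$ not containing other lattice-type obstructions. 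Concretely I would argue: if $z\in\mathbb{Q}_2\cap\overline{D}(\tfrac{19}{2},2^{-4})$ then $|z-\tfrac{19}{2}| = 2^{-j}$ for some integer $j\ge 4$ (including $j=+\infty$), and a computation of $f_t^2(z)-\tfrac{19}{2}$ shows its absolute value is again $2^{-j'}$ with $j'\ge 4$ (with the repelling behavior meaning $j'<j$ when $j>4$, but $j'$ never drops below $4$). Hence the orbit, once inside, stays inside one of the two disks forever, so it is bounded. One must be slightly careful that the orbit, while ping-ponging, does not wander in valuation in a way that eventually leaves; the cleanest formulation is an invariant of the form: $f_t$ maps $\{z\in\mathbb{Q}_2 : |z-\tfrac{19}{2}|\le 2^{-4}\}$ into $\{z\in\mathbb{Q}_2 : |z-\tfrac{27}{2}|\le 2^{-4}\}$ and conversely, which immediately gives forward-invariance of the union and hence boundedness.

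I would therefore structure the proof as: (i) show the disk-swap inclusions $f_t(\overline{D}(\tfrac{19}{2},2^{-4}))\subseteq\overline{D}(\tfrac{27}{2},2^{-4})$ and $f_t(\overline{D}(\tfrac{27}{2},2^{-4}))\subseteq\overline{D}(\tfrac{19}{2},2^{-4})$ for all $t\equiv 1\pmod{32}$ (pure valuation bookkeeping on the cubic, using $t\equiv 1\pmod{32}$); (ii) conclude that $\overline{D}(\tfrac{19}{2},2^{-4})\cup\overline{D}(\tfrac{27}{2},2^{-4})$ is forward-invariant under $f_t$; (iii) note every point of this union has absolute value $\le 2$ (indeed $|\tfrac{19}{2}| = |\tfrac{27}{2}| = 2$ and the radius is $<2$), hence the forward orbit of any $z$ in the union is bounded, giving the claim. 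The main obstacle is step (i): getting the inclusions to hold for \emph{all} $t\equiv 1\pmod{32}$ rather than just $t=1$ requires choosing the disk radius ($2^{-4}$) exactly right so that the perturbation terms $32w\cdot(\text{stuff})$ and the quadratic/cubic terms in $u$ all have valuation at least $4$ after multiplication by the various $\tfrac32$ and integer coefficients; I expect the computation to be tight, and it is presumably why the specific radius $2^{-4}$ and modulus $32$ appear in the statement. Verifying that tightness — in particular that no term of valuation exactly $3$ survives — is where the real work lies, but it is a finite computation with no conceptual difficulty once set up.
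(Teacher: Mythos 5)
There is a genuine gap: your step (i) is false, and it is the load-bearing step. The set $\{\tfrac{19}{2},\tfrac{27}{2}\}$ is \emph{not} a $2$-cycle for $f_1$, and the two disks do not swap. A direct computation gives $f_1\left(\tfrac{19}{2}\right)=3\cdot\tfrac{6859}{8}-\tfrac{9}{2}\cdot\tfrac{361}{4}+1=2167$, and $\left|2167-\tfrac{27}{2}\right|=\left|\tfrac{4307}{2}\right|=2$, so $f_1\left(\overline{D}\left(\tfrac{19}{2},2^{-4}\right)\right)$ is nowhere near $\overline{D}\left(\tfrac{27}{2},2^{-4}\right)$; it lands in $\overline{D}\left(3,2^{-2}\right)$. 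The actual repelling $2$-cycle consists of the roots of $6z^2-3z-1$, one of absolute value $1$ (congruent to $7$ mod $16$, hence in $\overline{D}(3,2^{-2})$) and one of absolute value $2$ (in $\overline{D}(\tfrac{19}{2},2^{-4})$). The correct combinatorics, which the paper verifies by explicit substitution, is
\[
\overline{D}\left(\tfrac{27}{2},2^{-4}\right)\to\overline{D}\left(2,2^{-2}\right)\to\overline{D}\left(3,2^{-2}\right)\to\overline{D}\left(\tfrac{19}{2},2^{-4}\right)\to\overline{D}\left(3,2^{-2}\right)\to\cdots,
\]
so $\overline{D}\left(\tfrac{27}{2},2^{-4}\right)$ is strictly preperiodic and the trap is the pair $\overline{D}\left(3,2^{-2}\right)\leftrightarrow\overline{D}\left(\tfrac{19}{2},2^{-4}\right)$, not the pair named in the lemma. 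Your overall strategy (exhibit a forward-invariant union of disks of absolute value at most $2$) is exactly the right one and is what the paper does, but applied to the wrong union; as written, the first inclusion you propose to verify fails immediately.

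Separately, your explanation of why $z\in\mathbb{Q}_2$ is needed (a ``floor'' on integer valuations) misidentifies the mechanism. The issue is not that valuations of $\mathbb{Q}_2$ points are integers; it is residues modulo $2$. Writing a point of $\overline{D}(3,2^{-2})$ as $3+4k$, one finds $f_1(3+4k)\equiv\tfrac{19}{2}+8k(k+1)\pmod{16}$. For arbitrary $k\in\mathbb{C}_2$ with $|k|\le 1$ the term $8k(k+1)$ can have absolute value $2^{-3}$, so the image disk is $\overline{D}\left(\tfrac{19}{2},2^{-3}\right)$ and the union is \emph{not} forward-invariant over $\mathbb{C}_2$ (consistent with the cycle being repelling). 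For $k\in\mathbb{Z}_2$, however, $k(k+1)$ is a product of consecutive $2$-adic integers and hence even, so $8k(k+1)\equiv 0\pmod{16}$ and the image lies in $\overline{D}\left(\tfrac{19}{2},2^{-4}\right)$. That single congruence is the entire content of the $\mathbb{Q}_2$ hypothesis, and your proposal does not contain it.
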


\begin{proof}
The proof is a straightforward computation of iteration of disks. First we show that a point in $\overline{D}\left(\frac{27}{2}, 2^{-4}\right)$ maps into $\overline{D}\left(\frac{19}{2}, 2^{-4}\right)$ after three iterations. Let $z=\frac{27}{2}+16k_0$, where $k_0 \in \mathbb{Z}_2$. Then applying $f_t$ to $z$, we get
\[ f_t\left(\frac{27}{2} + 16k_0\right) = 1+6561t + 24300k_0t + 29952k_0^2t + 12288k_0^3t \in \overline{D}\left(2,2^{-2}\right).\]
Next, we apply $f_t$ to a point of the form $2+4k_1, k_1 \in \mathbb{Z}_2$:
\[ f_t\left(2+4k_1\right) = 1+6t + 72k_1t+216k_1^2t + 192k_1^3t \in \overline{D}\left(3,2^{-2}\right).\]
Finally, we apply $f_t$ to a point of the form $3+4k_2, k_2 \in \mathbb{Z}_2$:
\[ f_t\left(3+4k_2\right)  =  1+\frac{81}{2} t + 216 k_2t+360k_2^2t + 192 k_2^3t \equiv \frac{19}{2} + 8k_2(k_2+1) \pmod{16}.\]
Note that since $k_2 \in \mathbb{Z}_2$, we know $k_2(k_2+1) \equiv 0 \pmod{2}$, and so this point lies in the disk $\overline{D}\left(\frac{19}{2},2^{-4}\right)$.

Next, we show that points in $\overline{D}\left(\frac{19}{2},2^{-4}\right)$ map back into the disk $\overline{D}\left(3,2^{-2}\right)$, and so $\mathbb{Q}_2$ points in these disks have bounded orbits under iteration of $f_t$.

Let $z=\frac{19}{2} + 16k_3, k_3 \in \mathbb{Z}_2$. Then we have
\[f_t\left( z \right) = 1+2166t+11628k_3t+20736k_3^2t+12288k_3^3t  \in \overline{D}\left(3,2^{-2}\right),\]
as desired.
\end{proof}

In the next lemma, we show that if $t$ and $t_0$ are sufficiently close together, then $f_t$ and $f_{t_0}$ will map disks of a certain type to the same image.

\begin{lem}\label{lem:tvst0}
Let $\alpha \in \mathbb{C}_2$ such that $\left|\alpha-\frac12\right| \leq 1$ and let $r \leq 1$. Suppose that $f_{t_0}$ maps $\overline{D}\left(\alpha,r\right)$ to $\overline{D}\left(\beta,4r\right)$, where $\beta=f_{t_0}(\alpha)$. Then if $\left| t-t_0 \right| \leq r$, $f_t$ will also map $\overline{D}\left(\alpha,r\right)$ to $\overline{D}\left(\beta,4r\right)$.
\end{lem}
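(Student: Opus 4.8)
The plan is to compare $f_t$ and $f_{t_0}$ directly on the disk $\overline{D}(\alpha,r)$ by writing out the difference $f_t(z)-f_{t_0}(z)$ and controlling its absolute value. Recall $f_s(z) = 1 - \tfrac32 s(-2z^3+3z^2)$, so for a fixed $z$ we have $f_t(z) - f_{t_0}(z) = -\tfrac32(t-t_0)(-2z^3+3z^2) = -\tfrac32(t-t_0)z^2(3-2z)$. First I would observe that any $z \in \overline{D}(\alpha,r)$ with $r\le 1$ satisfies $|z-\tfrac12|\le\max\{r,|\alpha-\tfrac12|\}\le 1 < 2$, which forces $|z|\le 2$ (in fact the argument in Lemma \ref{lemma1} shows $|z|=2$ unless $z$ is near $\tfrac12$, but the bound $|z|\le 2$ is all that is needed here), hence $|z^2(3-2z)| \le 4$ and $|{-}\tfrac32| = 2$. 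Therefore $|f_t(z)-f_{t_0}(z)| \le 2\cdot 4\cdot|t-t_0| \le 8r$. Hmm — this only gives $8r$, not $4r$, so a slightly sharper observation is needed: since $|z-\tfrac12|\le 1$ we have $2z-1 \in \overline{D}(0,2)$, but more usefully $z^2(3-2z) = -\tfrac12(2z-1)^2(2z-1) + \text{(lower order)}$; I should recompute $f_s(z)+\tfrac12 = (z+\tfrac12)\cdot 3s(z-1)^2 - \tfrac32(s-1)$ as in Lemma \ref{lemma1}, so that $f_t(z)-f_{t_0}(z) = 3(t-t_0)(z+\tfrac12)(z-1)^2 - \tfrac32(t-t_0)$. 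Then $|f_t(z)-f_{t_0}(z)| \le \max\{|t-t_0|\cdot|z+\tfrac12|\cdot|z-1|^2,\ 2|t-t_0|\}$, and since $|z+\tfrac12|\le 1$ and $|z-1|\le 2$ this is at most $\max\{4|t-t_0|, 2|t-t_0|\} = 4|t-t_0| \le 4r$.

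With that estimate in hand, the lemma follows from the strong triangle inequality: for any $z\in\overline{D}(\alpha,r)$,
\[
|f_t(z) - \beta| = |f_t(z) - f_{t_0}(\alpha)| \le \max\{|f_t(z) - f_{t_0}(z)|,\ |f_{t_0}(z) - f_{t_0}(\alpha)|\} \le \max\{4r,\ 4r\} = 4r,
\]
where the second term is bounded by $4r$ precisely because $f_{t_0}$ maps $\overline{D}(\alpha,r)$ into $\overline{D}(\beta,4r)$ by hypothesis. This shows $f_t(\overline{D}(\alpha,r)) \subseteq \overline{D}(\beta,4r)$. Finally, since $f_t$ is a polynomial map and a nonarchimedean disk is mapped onto a disk, and $\beta = f_{t_0}(\alpha) = f_t(\alpha) + (f_{t_0}(\alpha)-f_t(\alpha)) \in \overline{D}(\beta,4r)$ with $f_t(\alpha)\in\overline{D}(\beta,4r)$ as well, one checks the image is exactly $\overline{D}(\beta,4r)$ rather than a proper subdisk — either by noting that the hypothesis already records that $f_{t_0}$ expands $\overline{D}(\alpha,r)$ by the factor $4$ (so the leading behavior of $f_s$ on this disk has the right "derivative size" uniformly in $s$ close to $t_0$), or simply by replacing "maps to" with "maps into" throughout, which is all the applications require.

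The main obstacle is purely bookkeeping: getting the constant in $|f_t(z)-f_{t_0}(z)|$ to come out as $4r$ and not $8r$, which is why the rewriting $f_s(z)+\tfrac12 = 3s(z+\tfrac12)(z-1)^2 - \tfrac32(s-1)$ from the proof of Lemma \ref{lemma1} is the right tool — it exposes the factor $z+\tfrac12$ of small absolute value. Everything else is an application of the ultrametric inequality, and no subtlety about $\mathbb{Q}_2$ versus $\mathbb{C}_2$ enters at this step (that restriction only matters later, in the proof of Theorem \ref{thm:q2bdd} via Lemma \ref{lem:2cycle}).
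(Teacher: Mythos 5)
Your proof is correct and follows essentially the same route as the paper: compare $f_t(z)-f_{t_0}(z)=-\tfrac32(t-t_0)(-2z^3+3z^2)$, bound this difference by $4r$, and conclude with the ultrametric inequality. The only difference is that your detour through the identity from Lemma~\ref{lemma1} is unnecessary: writing $z=\tfrac12+w$ with $|w|\le 1$ gives $3-2z=2(1-w)$, so $|3-2z|\le\tfrac12$ and $\left|z^2(3-2z)\right|\le 2$ (not merely $4$), which is exactly the sharper direct estimate the paper uses to get $2\cdot|t-t_0|\cdot 2\le 4r$.
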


\begin{proof}
Write $t=t_0+\epsilon$, where $| \epsilon | \leq r$. Let $z=\alpha+\delta$, where $|\delta | \leq r$. Then we have
\[f_t(z) =  -\tfrac32(t_0+\epsilon)(-2z^3+3z^2)+1 = f_{t_0}(z) - \tfrac32 \epsilon(-2z^3+3z^2).   \]
This will lie in $\overline{D}\left(\beta,4r\right)$ if $\left| \tfrac32 \epsilon(-2z^3+3z^2)\right| \leq 4r$. Since $z \equiv \tfrac12 \pmod{1}$, we know that $|-2z^3+3z^2| \leq 2$, and thus
\[ \left| \tfrac32 \epsilon(-2z^3+3z^2)\right| \leq 2 \cdot |\epsilon| \cdot 2 \leq 4r,\]
as desired.

\end{proof}
We are now ready to prove Theorem~\ref{thm:q2bdd}.
\begin{proof}[Proof of Theorem~\ref{thm:q2bdd}]
We will show that if $t \in D_{a,n}\cap \mathbb{Q}_2$ or if $t \in D_{b,n} \cap \mathbb{Q}_2$, then $f_t^n(1)$ will lie in either $\overline{D}\left( \frac{19}{2},2^{-4} \right)$ or $\overline{D} \left( \frac{27}{2}, 2^{-4}\right)$. Then, Lemma~\ref{lem:2cycle} will imply that the critical orbit is bounded. 

Let $n=3$. Then the first disk in each sequence is
\[ D_{a,3} = \overline{D}\left(321, 2^{-9}\right), \]
\[ D_{b,3} = \overline{D}\left(449, 2^{-9} \right).\]
First consider $t \in D_{a,3} \cap \mathbb{Q}_2$. In this case, we can show by direct computation that $f_t^3(1) \in \overline{D}\left(\tfrac{19}{2},2^{-4}\right)$:
\[ f_t(1) = 1-\tfrac32 t \equiv -\tfrac{961}{2} \pmod{2^8}\]
\[ \implies f_t^2(1) \equiv f_t\left(-\tfrac{961}{2}\right) \equiv \tfrac{111}{2} \pmod{2^6} \]
\[ \implies f_t^3(1) \equiv f_t \left( \tfrac{111}{2} \right) \equiv \tfrac{19}{2} \pmod{2^4}.\]

Similarly, we can show that if $t \in D_{b,3} \cap \mathbb{Q}_2$, then $f_t^3(1) \in \overline{D}\left(t\frac{27}{2},2^{-4}\right)$.
\[ f_t(1) = 1-\tfrac32 t \equiv -\tfrac{1345}{2} \pmod{2^8}\]
\[ \implies f_t^2(1) \equiv f_t\left( -\tfrac{1345}{2}  \right) \equiv \tfrac{79}{2} \pmod{2^6}\]
\[ \implies f_t^3(1) \equiv f_t \left(\tfrac{79}{2} \right) \equiv \tfrac{27}{2} \pmod{2^4}.\]

Next, let $n >3$. We will show that if $t \in D_{a,n}$ and $t_0 \in D_{b,n-1}$, then $f_t^3(1) \in \overline{D} \left(f_{t_0}^2(1), 2^{-(2n-2)}\right)$. Similarly, if $t \in D_{b,n}$ and $t_0 \in D_{a,n-1}$, then $f_t^3(1)\in \overline{D} \left(f_{t_0}^2(1), 2^{-(2n-2)}\right)$. From there, using Lemma~\ref{lem:tvst0} we can argue that $t$ is in the Mandelbrot set if $t_0$ is in the Mandelbrot set. The theorem will then follow by induction.

First, we track the first three iterates of $1$ under $f_t$ if $t \in D_{a,n}$. Let $t \equiv 1+5\cdot 2^{2n} \pmod{2^{2n+3}}$. Then,
\[ f_t(1) = 1-\tfrac32 t \equiv -\tfrac12 + 2^{2n-1} \pmod{2^{2n+2}}\]
\[ \implies f_t^2(1) \equiv f_t\left(-\tfrac12 + 2^{2n-1}\right) \equiv -\tfrac12 + 7 \cdot 2^{2n-3} \pmod{2^{2n}}\]
\[ \implies f_t^3(1) \equiv f_t\left( -\tfrac12 + 7 \cdot 2^{2n-3}\right) \equiv -\tfrac12 + 5 \cdot 2^{2n-5} \pmod{2^{2n-2}}.\]

We next do the same for $t \in D_{b,n}$, so $t \equiv 1+7\cdot2^{2n} \pmod{2^{2n+3}}$:
\[f_t(1) = 1-\tfrac32t \equiv -\tfrac12 + 3\cdot 2^{2n-1} \pmod{2^{2n+2}}\]
\[ \implies f_t^2(1) \equiv -\tfrac12 + 5 \cdot 2^{2n-3} \pmod{2^{2n}}\]
\[ \implies f_t^3(1) \equiv -\tfrac12+ 7 \cdot 2^{2n-5} \pmod{2^{2n-2}}.\]

Note by the above that if $t \in D_{a,n}$ and $t_0 \in D_{b,n-1}$, then $f_t^3(1) \equiv f_{t_0}^2(1) \pmod{2^{2n-2}}$. Note also that $|t - t_0| =2^{-(2n-2)}$.  Thus, we can repeatedly apply Lemma~\ref{lem:tvst0} to conclude that for $3 \leq m \leq n, |f_t^m(1) - f_{t_0}^{m-1}(1)| \leq 2^{-(2n-2m+4)}$. In particular, if $f_{t_0}^{n-1}(1)$ lies in either $\overline{D}\left(\frac{19}{2},2^{-4}\right)$ or $\overline{D}\left( \frac{27}{2},2^{-4}\right)$, then $f_{t}^n(1)$ will as well. Thus, by induction, the critical orbit is bounded.

The same argument works if $t \in D_{b,n}$ and $t_0 \in D_{a,n-1}$ by noting that $f_t^3(1) \equiv f_{t_0}^2(1) \pmod{2^{2n-2}}$.
\end{proof}

\begin{remark}
As Theorems \ref{thm: unbdd} and \ref{thm:q2bdd} are given in terms of closed $2$-adic discs, it is natural to consider an interpretation of these results in Berkovich space. For the interested reader, please see \cite{bakerrumely,benedetto2019dynamics,temkin2015introduction} for background on Berkovich space. Our results show that the type I point at $t=1$ is on the boundary of the slice of the degree 3 $2$-adic Mandelbrot set. Theorem \ref{thm: unbdd} shows that every type II point centered at $1+2^{2n-1}$ with radius $2^{-2n}$ for $n\geq 1$ is outside of the Mandelbrot set, along with all points below these type II points. Theorem \ref{thm:q2bdd} shows that for $n\geq 3$ every $\QQ_2$ point below a type II point centered at $1+5\cdot 2^{2n}$ or at $1+7\cdot 2^{2n}$ with radius $2^{-(2n+3)}$ is in the Mandelbrot set.

\end{remark}

\section{The Julia Set for $f_1$}\label{sec:julia}

In this section, we explore the filled Julia set for the following map in our family: 
\[ f_1(z) = -\tfrac{3}{2}(-2z^3+3z^2)+1 = 3z^3-\tfrac{9}{2}z^2+1.\]
In particular, we look in a neighborhood of the repelling fixed point $z=-\tfrac{1}{2}$, to examine the structural similarities with the Mandelbrot set near this parameter.

First we note that if $|z|>2$, then $|f_1(z)|=|z|^3$, and so $|f_1^n(z)| = |z|^{3^n}$ and the orbit of $z$ tends to infinity. Thus, the filled Julia set for $f_1$ is contained in the closed disk centered at $0$ of radius $2$. Moreover, the filled Julia set therefore consists of all points $z$ for which $\mathcal{O}_{f_1}(z)$ is contained inside the disk $\overline{D}(0,2)$. This bound for the radius of the filled Julia set is sharp, since $z=-\tfrac{1}{2}$ is a repelling fixed point, and therefore in the Julia set. Note that both critical points $0$ and $1$, along with $\tfrac{3}{2}$, are in the backward orbit of $-\frac{1}{2}$ and are thus in the Julia set as well. (We have $f_1(0) = 1, f_1\left(\tfrac{3}{2}\right) = 1$,and $f_1(1) = -\tfrac{1}{2}$.)

Similarly, if $1 < |z| < 2$ or if $2^{-1/2} <|z| < 1$, the orbit of $z$ will be unbounded: if $1<|z|<2$, then $|f_1(z)| = 2|z|^2 >2$, and if $2^{-1/2} <|z| < 1$, then $|f_1(z)| = 2|z|^2$, and $|f_1^2(z)| = 2(2|z|^2)^2 = 8|z|^4>2$. Thus the filled Julia set consists of points of absolute value 2, points of absolute value 1, and points of absolute value less than or equal to $2^{-1/2}$.

Since $z=-\frac{1}{2}$ is a fixed point with multiplier $|\lambda| = |f_1'\left(-\frac{1}{2}\right)| = |\frac{27}{4}| = 4$, we know it is repelling and therefore in the Julia set. This implies that there are points arbitrarily close to $-\frac{1}{2}$ with bounded orbits, and there are also points arbitrarily close to $-\frac{1}{2}$ with unbounded orbits. We explore a tree of points centered at $-\frac{1}{2}$ to determine if there are any patterns in how these points with bounded and unbounded orbits are distributed as we zoom in on our repelling fixed point.

\begin{figure}[h!]
\centering

\includegraphics[width=0.8\textwidth]{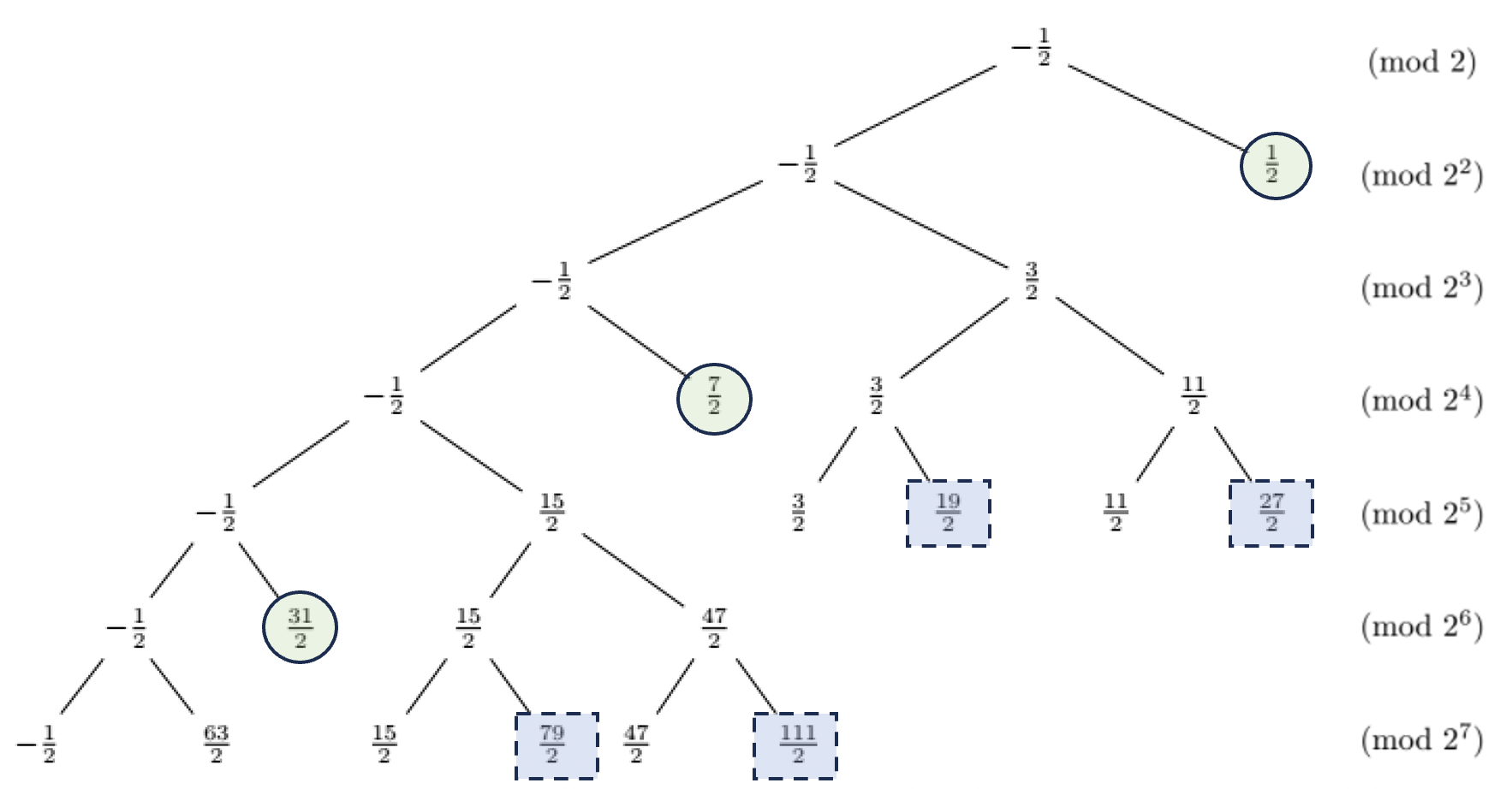}
\caption{}
\label{fig:Julia}
\end{figure}

\begin{figure}[h!]
\centering

\includegraphics[width=0.8\textwidth]{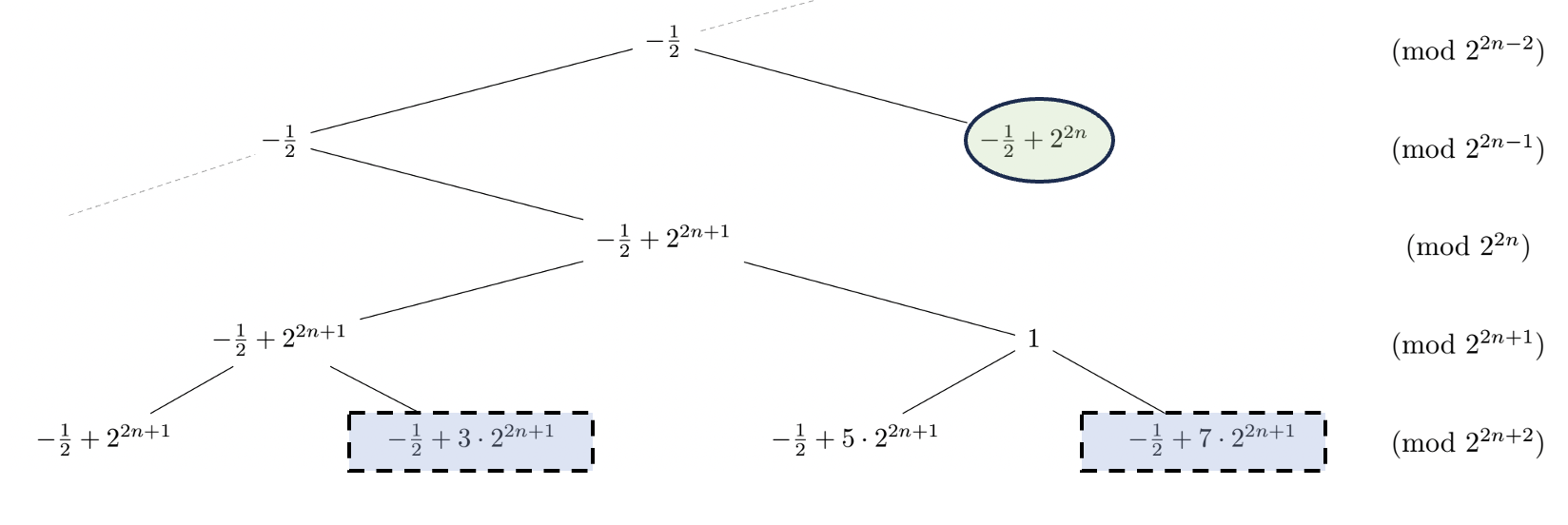}
\caption{}
    \label{fig:juliapattern}
\end{figure}

Propositions \ref{prop:unbddjulia} and \ref{prop:bddjulia} show that we have a tree pattern as shown in Figure \ref{fig:Julia} with the pattern shown in Figure \ref{fig:juliapattern} repeating every two levels as we zoom in on $-\frac12$. In Figure~\ref{fig:Julia} and Figure~\ref{fig:juliapattern}, a disk is labeled with a rectangle if all $\mathbb{Q}_2$ points in that disk belong to the filled Julia set, and with a circle if no points in that disk belong to the filled Julia set, i.e. if all points have unbounded orbits.

The following proposition gives a sequence of disks decreasing in radius and approaching $-\frac{1}{2}$ consisting entirely of points with unbounded orbits. It establishes a pattern of disks outside the filled Julia set for $f_1$ similar to the pattern of disks outside the Mandelbrot set in the parameter space that we observed in Theorem~\ref{thm: unbdd}.

\begin{prop}\label{prop:unbddjulia} All points contained in the disk $D_n = \overline{D}\left( -\frac{1}{2}+2^{2n}, 2^{-(2n+1)}\right)$ for $n \in \ZZ, n \geq 0$ have unbounded orbits under iteration of $f_1$.
\end{prop}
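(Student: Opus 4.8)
The plan is to show that a point $z$ in $D_n$ escapes after exactly $n+1$ further iterations by tracking the distance $|z + \tfrac12|$, exactly mirroring the argument for Theorem~\ref{thm: unbdd}. First I would observe that if $z \in D_n = \overline{D}(-\tfrac12 + 2^{2n}, 2^{-(2n+1)})$, then $|z + \tfrac12| = 2^{-2n}$, since $2^{2n}$ dominates the error term of absolute value at most $2^{-(2n+1)}$. The key computational identity is the same one used in Lemma~\ref{lemma1} with $t = 1$: we have $f_1(z) + \tfrac12 = (z+\tfrac12)(3z^2 - 6z + 3) = 3(z+\tfrac12)(z-1)^2$. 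Since $|z+\tfrac12| = 2^{-2n} < 2$ forces $|z| = 2$ and hence $|z - 1| = 2$, we get $|f_1(z) + \tfrac12| = |z+\tfrac12| \cdot 4 = 2^{-2n+2}$, so the distance from $-\tfrac12$ quadruples under each application of $f_1$ as long as it stays below $2$ in absolute value.

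Next I would iterate this: as long as $|f_1^m(z) + \tfrac12| = 2^{-2n + 2m} < 2$ — that is, for $m \le n$ — the computation applies again (the hypothesis of Lemma~\ref{lemma1} with $t=1$, namely $|w + \tfrac12| = 2^k$ with $k < 1$, is met, and one checks $|w|=2$ as above). Thus $|f_1^m(z) + \tfrac12| = 2^{-2n+2m}$ for all $0 \le m \le n$, giving in particular $|f_1^n(z) + \tfrac12| = 1$. One more iteration: since $|f_1^n(z) + \tfrac12| = 1 < 2$, we still have $|f_1^n(z)| = 2$, so the identity gives $|f_1^{n+1}(z) + \tfrac12| = 4$, whence $|f_1^{n+1}(z)| = 4 > 2$. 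By the observation at the start of Section~\ref{sec:julia} that any point of absolute value greater than $2$ has orbit tending to infinity under $f_1$, the orbit of $z$ is unbounded.

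The argument is essentially a specialization of Theorem~\ref{thm: unbdd}, so there is no serious obstacle; the one point requiring a little care is the base/edge case. When $n = 0$ the disk is $\overline{D}(-\tfrac12 + 1, 2^{-1}) = \overline{D}(\tfrac12, 2^{-1})$, and $|z + \tfrac12| = 1$; the "$m \le n$" range of iterations is empty, and one simply checks directly that $|z| = 2$ (which holds since $z \in \overline{D}(\tfrac12, \tfrac12)$ gives $z \equiv \tfrac12$, so $|z| = 2$), hence $|f_1(z) + \tfrac12| = 4$ and $z$ escapes immediately. I should also note explicitly that $f_1$ replaces $f_t$ and that the error term $-\tfrac32(t-1)$ appearing in Lemma~\ref{lemma1} vanishes identically here, which is why the radii in $D_n$ are $2^{-(2n+1)}$ rather than the $2^{-2n}$ seen in the parameter-space statement — the cleaner identity $f_1(z) + \tfrac12 = 3(z+\tfrac12)(z-1)^2$ means we only need enough precision to pin down $|z+\tfrac12|$ and $|z-1|$, and $2^{-(2n+1)}$ suffices for that.
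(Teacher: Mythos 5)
Your proof is correct. It is the same core argument as the paper's — both show that a point of $D_n$ escapes after exactly $n+1$ iterations by quadrupling its distance from the repelling fixed point $-\tfrac12$ at each step — but you organize it differently: you specialize Lemma~\ref{lemma1} to $t=1$ (where the error term $-\tfrac32(t-1)$ vanishes, so the clean identity $f_1(z)+\tfrac12 = 3(z+\tfrac12)(z-1)^2$ holds) and track only the absolute value $|f_1^m(z)+\tfrac12| = 2^{-2n+2m}$, whereas the paper gives a self-contained congruence computation showing the stronger nesting statement $f_1(D_n)\subseteq D_{n-1}$ for $n\ge 1$, together with a direct check that every point of $D_0$ lands at absolute value $4$. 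Your route is slightly more economical and makes the parallel with Theorem~\ref{thm: unbdd} explicit; the paper's route pins down the actual congruence class of each iterate (namely $f_1(z)\equiv -\tfrac12+2^{2n-2}\pmod{2^{2n-1}}$), which is what justifies the tree pictures in Figures~\ref{fig:Julia} and~\ref{fig:juliapattern}. Your handling of the $n=0$ edge case and your observation that the radius $2^{-(2n+1)}$ is exactly what is needed to pin down $|z+\tfrac12|=2^{-2n}$ are both correct.
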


\begin{proof}
    First note that if $z \in D_0$, then $z = \frac12 +2k$ for some $k$ such that $|k| \leq 1$. Then, $|f_1(z)|>2$ and thus the orbit of $z$ is unbounded:
    \[ 
    \left|f_1\left(\tfrac12 + 2k\right)\right| = \left|3\left(\tfrac12 + 2k\right)^3-\tfrac{9}{2} \left(\tfrac12 + 2k\right)^2 + 1\right| = \left|\tfrac{1}{4}-\tfrac{9}{2}k+24k^3\right| = 4.
\]
    We complete the proof by noting that for $n>1$, $f_1(D_n) \subseteq D_{n-1}$. If $z \in D_n$ then $z = -\tfrac{1}{2} + 2^{2n} + 2^{2n+1}k$ for some $k$ such that $|k| \leq 1$. Then, computing $f_1(z)$, we get the following:
    \[ f_1\left( -\tfrac{1}{2} + 2^{2n}(1+2k) \right) = 3 \left(\ -\tfrac{1}{2} + 2^{2n}(1+2k) \right)^3-\tfrac{9}{2} \left(\ -\tfrac{1}{2} + 2^{2n}(1+2k) \right)^2 + 1\]
    \[ = -\tfrac{1}{2} + 27 \cdot 2^{2n-2}(1+2k)-9 \cdot 2^{4n}(1+2k)^2 + 3 \cdot 2^{6n}(1+2k)^3 \equiv -\tfrac{1}{2} + 2^{2n-2} \pmod{2^{2n-1}},\]
     and so it is contained in $D_{n-1}$.
    
    Since each $D_n$ maps into $D_{n-1}$, after $n$ iterations, every point in $D_n$ maps to a point of absolute value $4$ and thus has unbounded orbit.
\end{proof}

In the next proposition, we give two sequences of disks decreasing in radius and approaching $-\frac{1}{2}$ in which all of the $\QQ_2$ points have bounded orbits. This proposition gives a pattern of points with bounded orbits that is exactly analogous to the pattern we saw in the Mandelbrot set in Theorem~\ref{thm:q2bdd}. One can see the similarities between the Julia set for $f_1$ zoomed in to a particular level on $z=-\frac12$ and the slice of the Mandelbrot set for $f_t$ zoomed in to the same level on $t=1$ by comparing Figures~\ref{fig:repeatedmandelset} and~\ref{fig:juliapattern}. This similarity between the Mandelbrot set and the corresponding Julia set near a boundary point of the Mandelbrot set is analogous to what one sees for Misiurewicz points in the complex Mandelbrot set.

\begin{prop}\label{prop:bddjulia}
    Let $D_{a,n}$ and $D_{b,n}$ be sequences of disks defined as follows for $n \in \ZZ$, $n\geq 0$:
    \[ D_{a,n} = \overline{D}\left(-\tfrac12 + 5 \cdot 2^{2n+1}, 2^{-(2n+4)}\right),
    \]
    \[ D_{b,n} = \overline{D} \left(-\tfrac12 + 7 \cdot 2^{2n+1}, 2^{-(2n+4)}\right).\]
    
    Then, all of the $\QQ_2$ points in these disks have bounded orbits under iteration of $f_1$.
\end{prop}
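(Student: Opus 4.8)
The plan is to mirror, for the dynamical (Julia) setting, the inductive argument used for Theorem~\ref{thm:q2bdd}, but now with $f_1$ playing the role that $f_t$ played there and with the repelling fixed point $-\tfrac12$ in place of the parameter $t=1$. The key observation is that the two sequences $D_{a,n}$ and $D_{b,n}$ should, after one iteration of $f_1$, map onto one another with the radius expanding by a factor of $4$ (consistent with the multiplier $|\lambda|=4$ at $-\tfrac12$), and that this chain of disks eventually lands in one of the two ``trapping'' disks $\overline{D}(\tfrac{19}{2},2^{-4})$ or $\overline{D}(\tfrac{27}{2},2^{-4})$ from Lemma~\ref{lem:2cycle} (which we may invoke with $t=1$, since $1\equiv 1\pmod{32}$). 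So first I would set up the base case: for $n=0$ compute $f_1(D_{a,0})$ and $f_1(D_{b,0})$ directly. I expect, by analogy with the parameter-space computation, that $f_1$ sends $D_{a,0}$ into $D_{b,1}$-type data one level up, or more precisely that the orbit of any $\QQ_2$ point in $D_{a,0}$ or $D_{b,0}$ reaches $\overline{D}(\tfrac{19}{2},2^{-4})$ or $\overline{D}(\tfrac{27}{2},2^{-4})$ after finitely many steps; this is a routine (if slightly tedious) explicit iteration of disks, exactly parallel to the $n=3$ computations in the proof of Theorem~\ref{thm:q2bdd}.

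Next I would handle the inductive step. For $n>0$, I would show that $f_1(D_{a,n}) \subseteq \overline{D}(\beta, 2^{-(2n+2)})$ where $\beta$ is a point congruent mod a suitable power of $2$ to an element of $D_{b,n-1}$ (and symmetrically for $D_{b,n}$ mapping to $D_{a,n-1}$), with the radius growing by the factor $4$ predicted by the multiplier. Concretely, writing $z = -\tfrac12 + 5\cdot 2^{2n+1} + 2^{2n+4}k$ with $|k|\le 1$ and expanding $f_1(z) = 3z^3 - \tfrac92 z^2 + 1$, the leading correction to $-\tfrac12$ comes from the linear term $f_1'(-\tfrac12)(z+\tfrac12) = \tfrac{27}{4}(z+\tfrac12)$, which multiplies the $2$-adic size of $z+\tfrac12$ by $|27/4| = 4$; the quadratic and cubic corrections are of strictly smaller absolute value and can be absorbed, exactly as in Proposition~\ref{prop:unbddjulia}. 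Tracking congruences carefully should show $f_1(D_{a,n})$ lands on the disk whose center agrees mod $2^{2n-2}$ with (a point of) $D_{b,n-1}$, so iterating $f_1$ from there follows the same bounded orbit as the $n-1$ case; by induction the orbit is bounded.

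The one genuine subtlety — the same one flagged in the Remark after Theorem~\ref{thm:q2bdd} — is that one \emph{cannot} drop the hypothesis $z\in\QQ_2$: these orbits accumulate at a repelling $2$-cycle, so there are $\CC_2$-points arbitrarily close with escaping orbits, and the $\QQ_2$-restriction is what Lemma~\ref{lem:2cycle} exploits (via the congruence $k(k+1)\equiv 0\pmod 2$ for $k\in\ZZ_2$, which fails for general $k\in\CC_2$). So in the final step I must be careful that the point reaching $\overline{D}(\tfrac{19}{2},2^{-4})$ or $\overline{D}(\tfrac{27}{2},2^{-4})$ is still a $\QQ_2$ point (it is, since $f_1\in\QQ_2[z]$ and $z\in\QQ_2$), and then Lemma~\ref{lem:2cycle} closes the argument. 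I expect the main obstacle to be purely bookkeeping: getting the powers of $2$ in the congruences to line up so that each $D_{a,n}$ (resp.\ $D_{b,n}$) maps precisely onto the data of $D_{b,n-1}$ (resp.\ $D_{a,n-1}$) rather than missing by a factor of $2$, but since the parameter-space analogue in Theorem~\ref{thm:q2bdd} worked out cleanly, and the present map $f_1$ is just the ``$t=1$ specialization'' of that family, I anticipate the same pattern persists.
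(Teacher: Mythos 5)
Your proposal is correct and follows essentially the same route as the paper: an explicit base-case disk computation at $n=0$ showing the orbit gets trapped near the repelling $2$-cycle (where the $\QQ_2$ hypothesis enters via $k(k+1)\equiv 0 \pmod 2$), plus the inductive step $f_1(D_{a,n})\subseteq D_{b,n-1}$ and $f_1(D_{b,n})\subseteq D_{a,n-1}$ driven by the multiplier $\lvert 27/4\rvert = 4$. Your one small deviation --- invoking Lemma~\ref{lem:2cycle} with $t=1$ for the base case rather than redoing the computation --- is legitimate (since $1\equiv 1\pmod{32}$ and $D_{a,0},D_{b,0}$ are exactly the disks in that lemma) and slightly streamlines what the paper writes out in full.
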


\begin{proof}
    First consider $n=0$. Then, $D_{a,0} = \overline{D}\left(\frac{19}{2},2^{-4}\right)$ and $D_{b,0} = \overline{D}\left(\frac{27}{2},2^{-4}\right)$. We will show that the $\QQ_2$ points in $D_{b,0}$ map to $\QQ_2$ points in $D_{a,0}$ after two iterations of $f_1$, and that the $\QQ_2$ points in $D_{a,0}$ map into $D_{a,0}$ after two iterations of $f_1$. Thus, all of these points have bounded orbit.

    By explicitly computing the iteration of disks under $f_1$ below, we will see the following:
    \[ \overline{D}\left(\tfrac{27}{2},2^{-4}\right) \to  \overline{D}\left(2,2^{-2}\right) \to \overline{D}\left(3,2^{-2}\right) \leftarrow \overline{D}\left(\tfrac{19}{2},2^{-4}\right).\] 
    
    Moreover, we will show that the $\QQ_2$ points in $\overline{D}\left(3,2^{-2}\right)$ map into $\overline{D}\left(\tfrac{19}{2},2^{-4}\right)$. So, these $\QQ_2$ points simply move back and forth between these two disks every time we apply $f_1$, and thus their orbits stay bounded.

    Applying $f_1$ to a point of the form $\frac{19}{2}+16k, |k| \leq 1 $, we get the following:
    \[ f_1\left(\tfrac{19}{2}+16k\right) = 2167+11628k+20736k^2+12288k^3 \equiv 3 \pmod{4}.\]
    
    Doing the same with $z=\frac{27}{2} + 16k$,
    \[ f_1\left(\tfrac{27}{2} + 16k \right) = 6562 + 24300k+29952k^2+12288k^3 \equiv 2 \pmod{4}.\]
    
    Next, with $z=2+4k$,
    \[ f_1 (2+4k) = 7+72k+216k^2+192 k^3 \equiv 3 \pmod{4}.\]  
    
    Finally, applying $f_1$ to a point of the form $3+4k, k \in \QQ_2$, we get the following:
    \[ f_1(3+4k) = \tfrac{83}{2} + 216k +360 k^2 + 192k^3 \equiv \tfrac{19}{2}+8k+8k^2 \pmod{16}.\]
    
    Note that for $k \in \QQ_2$, we have $8k+8k^2 = 8k(k+1) \equiv 0 \pmod{16}$, and so $f_1(3+4k)$ lies in the disk $\overline{D}\left(\tfrac{19}{2},2^{-4}\right)$.

    For $n \geq 1$, we will show that if $z \in D_{a,n}$, then $f_1(z) \in D_{b,n-1}$, and if $z \in D_{b,n}$, then $f_1(z) \in D_{a,n-1}$. This will complete the proof that all $\QQ_2$ points in these disks have bounded orbits. First, suppose $z \in D_{a,n}$. Then we can write $z=-\tfrac{1}{2}+2^{2n+1}(5+8k)$, where $|k| \leq 1$. Applying $f_1$ to $z$, we obtain:
    \[ f_1 \left(-\tfrac{1}{2}+2^{2n+1}(5+8k) \right) = -\tfrac{1}{2}+27\cdot 2^{2n-1}(5+8k) - 9 \cdot 2^{4n+2} (5+8k)^2+3 \cdot 2^{6n+3} (5+8k)^3 \]
    \[ \equiv -\tfrac12 + 7 \cdot 2^{2n-1} \pmod{2^{2n+2}} \in D_{b,n-1}.\]

    Next, consider $z \in D_{b,n}$. Then we can write $z=-\tfrac12 + 2^{2n+1}(7+8k)$, where $|k| \leq 1$. Applying $f_1$ to $z$, we get the following:
    \[ f_1\left(-\tfrac12 +  2^{2n+1}(7+8k) \right) = -\tfrac12 + 27 \cdot 2^{2n-1} (7+8k) - 9 \cdot2^{4n+2}(7+8k)^2+3\cdot2^{6n+3}(7+8k)^3
    \]
    \[\equiv -\tfrac12 + 5 \cdot 2^{2n-1} \pmod{2^{2n+2}} \in D_{a,n-1}.\]
Thus, for all $n \geq 0$, all $\mathbb{Q}_2$ points in $D_{a,n}$ or $D_{b,n}$ eventually map to $\mathbb{Q}_2$ points in $\bar{D}\left( \frac{19}{2}, 2^{-4} \right)$, and therefore have bounded orbits.
\end{proof}

In the proposition above, the $\QQ_2$ points in the disks $\overline{D}(3,2^{-2})$ and $\overline{D}\left( \frac{19}{2}, 2^{-4}\right)$ map to each other under $f_1$. This is because there is a (repelling) $2$-cycle in these disks, namely the roots of $6z^2-3z-1$. The roots of this polynomial, call them $\alpha_1$ and $\alpha_2$, map to each other under $f_1$. A Newton polygon analysis shows that one of these points, say $\alpha_1$, has absolute value 1 and the other has absolute value 2. Using Hensel's Lemma, we can see that $\alpha_1 \in \mathbb{Z}_2$ with $\alpha_1 \equiv 7 \pmod{16}$. So, the multiplier for this cycle is 
\[ |\lambda| = |f_1'(\alpha_1)f_1'(\alpha_2)| = |\alpha_1||\alpha_1-1||\alpha_2||\alpha_2-1| = 1 \cdot \tfrac12 \cdot 2 \cdot 2 = 2.\]

Since this cycle is repelling, there will be $\CC_2$ points that are not in $\QQ_2$ in these disks whose orbits will escape to infinity, while every $\QQ_2$ point in these disks has a bounded orbit. This is precisely the same as what we observed in Theorem~\ref{thm:q2bdd}.
 \nocite{*}

\begin{thebibliography}{1}

\bibitem{anderson2010p}
Jacqueline Anderson.
\newblock {\em p-adic Analogues of the Mandelbrot Set}.
\newblock PhD thesis, PhD thesis, Brown University, 2010.

\bibitem{anderson}
Jacqueline Anderson.
\newblock Bounds on the radius of the {$p$}-adic {M}andelbrot set.
\newblock {\em Acta Arith.}, 158(3):253--269, 2013.

\bibitem{anderson2020cubic}
Jacqueline Anderson, Michelle Manes, and Bella Tobin.
\newblock Cubic post-critically finite polynomials defined over $\mathbb{Q}$.
\newblock {\em Open Book Series}, 4(1):23--38, 2020.

\bibitem{bakerrumely}
Matthew Baker and Robert~S Rumely.
\newblock {\em Potential theory and dynamics on the Berkovich projective line}.
\newblock Number 159. American Mathematical Soc., 2010.

\bibitem{benedetto2019dynamics}
Robert~L Benedetto.
\newblock {\em Dynamics in one non-archimedean variable}, volume 198.
\newblock American Mathematical Soc., 2019.

\bibitem{TanLei}
Tan Lei.
\newblock Similarity between the mandelbrot set and julia sets.
\newblock {\em Communications in Mathematical Physics}, 134(3):587--617, 1990.

\bibitem{ADS}
Joseph~H. Silverman.
\newblock {\em The arithmetic of dynamical systems}, volume 241 of {\em
  Graduate Texts in Mathematics}.
\newblock Springer, New York, 2007.

\bibitem{temkin2015introduction}
Michael Temkin.
\newblock Introduction to berkovich analytic spaces.
\newblock {\em Berkovich spaces and applications}, pages 3--66, 2015.

\bibitem{TobinThesis}
Bella Tobin.
\newblock {\em Belyi {M}aps and {B}icritical {P}olynomials}.
\newblock PhD thesis, University of \Hawaii\ at \Manoa, 2019.

\end{thebibliography}

\end{document}